\numberwithin{equation}{section}
\newtheorem{theorem}{Theorem}[section]
\newtheorem{proposition}[theorem]{Proposition}
\newtheorem{lemma}[theorem]{Lemma}
\newtheorem{corollary}[theorem]{Corollary}
\theoremstyle{definition}
\newtheorem{definition}[theorem]{Definition}
\newtheorem{example}[theorem]{Example}
\newtheorem{problem}[theorem]{Problem}
\theoremstyle{remark}
\newtheorem{remark}[theorem]{Remark}
\newcommand{\Z}{\mathbb{Z}}
\newcommand{\Q}{\mathbb{Q}}
\newcommand{\R}{\mathbb{R}}
\newcommand{\C}{\mathbb{C}}
\newcommand{\Ph}{\mathrm{Ph}}
\newcommand{\F}{\mathcal{F}}
\renewcommand{\lim}{\underset{\longleftarrow}{\mathrm{lim}}}
\newcommand{\limone}{\underset{\longleftarrow}{\mathrm{lim}}^1}
\title[Relative phantom maps]{Relative phantom maps}
\author{Kouyemon Iriye}
\address{Department of Mathematical Sciences, Osaka
Prefecture University, Sakai, 599-8531, Japan}
\email{kiriye@mi.s.osakafu-u.ac.jp}
\author{Daisuke Kishimoto}
\address{Department of Mathematics, Kyoto University, Kyoto, 606-8502, Japan}
\email{kishi@math.kyoto-u.ac.jp}
\author{Takahiro Matsushita}
\address{Department of Mathematical Sciences, University of the Ryukyus, Iribaru-cho, Okinawa 903-0213, Japan}
\email{mtst@sci.u-ryukyu.ac.jp}
\subjclass[2010]{55P99}
\keywords{relative phantom map, relative triviality}
\begin{document}

\baselineskip.525cm

\maketitle

\begin{abstract}
The de Bruijn-Erd\H{o}s theorem states that the chromatic number of an infinite graph equals the maximum of the chromatic numbers of finite subgraphs. Such a determinativeness by finite subobjects appears in the definition of a phantom map which is classical in algebraic topology. The topological method in combinatorics connects these two, which leads us to define the relative version of a phantom map: a map $f\colon X\to Y$ is called a relative phantom map to a map $\varphi\colon B\to Y$ if the restriction of $f$ to any finite subcomplex of $X$ lifts to $B$ through $\varphi$, up to homotopy. There are two kinds of maps which are obviously relative phantom maps: (1) the composite of a map $X\to B$ with $\varphi$; (2) a usual phantom map $X\to Y$. A relative phantom map of type (1) is called trivial, and a relative phantom map out of a suspension which is a sum of (1) and (2) is called relatively trivial. We study the (relative) triviality of relative phantom maps and in particular, we give rational homology conditions for the (relative) triviality.
\end{abstract}


\section{Introduction}


\subsection{de Bruijn-Erd\H{o}s theorem}

We start with the classical de Bruijn-Erd\H{o}s theorem on graph colorings. A graph $G$ is called $n$-colorable if its vertices are colored by $n$ colors in such a way that adjacent vertices have different colors. Then the chromatic number $\chi(G)$ is defined to be the minimum $n$ such that $G$ is $n$-colorable. De Bruijn and Erd\H{o}s \cite{dBE} proved the following.

\begin{theorem}
\label{dBE}
The chromatic number of an infinite graph equals the supremum of the chromatic numbers of its finite subgraphs.
\end{theorem}

We shall connect this theorem to algebraic topology. Recall that the index of a free $\Z/2$-space $X$, denoted $\mathrm{ind}(X)$, is the minimum $n$ such that there is a $\Z/2$-equivariant map $X\to S^n$, where $\Z/2$ acts on $S^n$ by the antipodal map. To a graph $G$, possibly infinite, one associates a free $\Z/2$-complex $\mathtt{B}(G)$ which is called the box complex of $G$. As in \cite{MZ}, the chromatic number of a graph $G$ and the index of the box complex $\mathtt{B}(G)$ are related by the inequality
$$\chi(G)\ge\mathrm{ind}(\mathtt{B}(G))+2.$$
Then one may ask whether $\mathrm{ind}(\mathtt{B}(G))$ is determined by the indices of the box complexes of finite subgraphs of $G$ like the chromatic number as in Theorem \ref{dBE}. Let us polish this question to pose a general problem in algebraic topology. First, for any free $\Z/2$-space $X$, there is a $\Z/2$-equivariant map $X\to S^n$ if and only if the classifying map $X/(\Z/2)\to\R P^\infty$ factors through $\R P^n$, up to homotopy. Next, if $G$ is a finite graph, then its box complex $\mathtt{B}(G)$ is a finite complex. Then the above question is generalized  and formalized as the following problem which we call the topological de Bruijn-Erd\H{o}s problem.

\begin{problem}
Suppose that a map $f\colon X\to\R P^\infty$ from a CW-complex $X$ factors through $\R P^n$, up to homotopy, whenever it is restricted to any finite subcomplex of $X$. Then does $f$ itself factor through $\R P^n$?
\end{problem}


\subsection{Relative phantom maps}

Recall that a map $f\colon X\to Y$ from a CW-complex $X$ is called a phantom map if the restriction of $f$ to any finite subcomplex of $X$ is null homotopic. Phantom maps are classical in algebraic topology and their theory has been developed to a quite high level as one can see in \cite{M1}. The determinativeness by finite subobjects of the topological de Bruijn-Erd\H{o}s problem and phantom maps are quite similar, and actually, the topological de Bruijn-Erd\H{o}s problem can be rephrased by the following relative version of phantom maps.

\begin{definition}
A map $f\colon X\to Y$ from a CW-complex $X$ is called a relative phantom map from $X$ to $\varphi\colon B\to Y$ if the restriction of $f$ to any finite subcomplex of $X$ lifts to $B$ through $\varphi$, up to homotopy.
\end{definition}

When $B$ is a point, a relative phantom map to $\varphi\colon B\to Y$ is exactly a usual phantom map, so the name ``relative'' phantom map makes sense. We will call a usual phantom map an absolute phantom map to distinguish it with a relative phantom map.

It is obvious that any map $X\to B$ becomes a relative phantom map from $X$ to $\varphi\colon B\to Y$ after composition with $\varphi$. We call a map $X\to Y$ homotopic to such a composite a trivial relative phantom map from $X$ to $\varphi\colon B\to Y$, which is consistent with the triviality of an absolute phantom map. Then the topological de Bruijn-Erd\H{o}s problem can be rephrased as follows: is there a non-trivial relative phantom map to the inclusion $\R P^n\to\R P^\infty$? Thus we set our aim in this paper to study the ``triviality" of relative phantom maps.


\subsection{Triviality}

Let $\Ph(X,\varphi)$ denote the set of homotopy classes of relative phantom maps from $X$ to $\varphi\colon B\to Y$. We say that $\Ph(X,\varphi)$ is trivial if any relative phantom map from $X$ to $\varphi\colon B\to Y$ is trivial. Note that the triviality of $\Ph (X,\varphi)$ does not imply $\Ph(X, \varphi) = *$. For example, if $\varphi = {\rm id}_B$, then $\Ph(X,\varphi)$ is trivial but $\Ph(X,\varphi) = [X, B]$.

We will first consider a condition equivalent to the (non-)triviality of $\Ph(X,\varphi)$ when $\varphi$ extends to a homotopy fibration $B\xrightarrow{\varphi}Y\to Z$. By using this, we will show the following example which guarantees that there is certainly a non-trivial relative phantom map.

\begin{example}
[Example \ref{example_non-trivial}]
\label{non-trivial_suspension}
Let $u\colon BS^3\to K(\Z,4)$ be a generator of $H^4(BS^3;\Z)\cong\Z$, and extend it to a homotopy fibration sequence 
$$B\xrightarrow{\varphi}Y\to BS^3\xrightarrow{u}K(\Z,4).$$
Then $\Ph(\Sigma\C P^\infty,\varphi)$ is not trivial.
\end{example}

It is well known that any absolute phantom map into a torsion space, that is, a space with $\pi_n$ finite for any $n$, is trivial. Next we will generalize this fact to relative phantom maps. As well as absolute phantom maps \cite{M1}, we consider the class $\F$ of connected CW-complexes having finitely generated $\pi_n$ for $n\ge 2$.

\begin{theorem}
[Proposition \ref{rational_equiv}]
\label{theorem 1}
Suppose that $B,Y\in\F$ and $\varphi\colon B\to Y$ is an isomorphism in $\pi_n\otimes\Q$ for $n\ge 2$. Then $\Ph(\Sigma X,\varphi)$ is trivial.
\end{theorem}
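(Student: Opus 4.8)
The plan is to replace $\varphi$ by a fibration, control its fiber rationally, and then run the argument through the Milnor $\lim^1$ exact sequences attached to the skeletal filtration of $\Sigma X$. First I would convert $\varphi\colon B\to Y$ into a fibration and let $F$ be its fiber. Reading the long exact homotopy sequence of $F\to B\xrightarrow{\varphi}Y$ together with the hypothesis that $\varphi$ is an isomorphism on $\pi_n\otimes\Q$ for $n\ge2$, one finds $\pi_n(F)\otimes\Q=0$ for $n\ge2$; since $B,Y\in\F$ the groups $\pi_n(F)$ are finitely generated for $n\ge2$, hence \emph{finite} for $n\ge2$. Consequently the universal cover $\widetilde F$ is simply connected with $\pi_n(\widetilde F)=\pi_n(F)$ finite for all $n\ge2$, so $\widetilde F$ has finite homotopy groups in every degree.

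The crucial finiteness input comes next. Because $X$ is connected, $\Sigma X$ is simply connected, and I would choose its skeleta $W_n$ to be simply connected finite complexes. Any map from a simply connected complex into $F$ lifts (uniquely, based) through $\widetilde F\to F$, so $[W_n,F]\cong[W_n,\widetilde F]$ and $[\Sigma W_n,F]\cong[\Sigma W_n,\widetilde F]$; as $\widetilde F$ has finite homotopy groups and $W_n,\Sigma W_n$ are finite complexes, obstruction theory shows these mapping sets are \emph{finite}. Moreover, for a relative phantom map $f\colon\Sigma X\to Y$ the set $L_n$ of homotopy classes of lifts of $f|_{W_n}$ through $\varphi$ is nonempty by definition, and when nonempty its size is governed by the groups $H^k(W_n;\pi_k(F))$; these are finite (the simple connectivity of $W_n$ removes any contribution of the possibly infinite $\pi_1(F)$, and $\pi_k(F)$ is finite for $k\ge2$), so each $L_n$ is finite.

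The rest is a two-step $\lim/\limone$ argument. Step one: the restriction maps make $\{L_n\}$ a tower of nonempty finite sets, so $\lim L_n\neq\varnothing$; a point of it is $\overline g\in\lim[W_n,B]$ with $\varphi_*\overline g$ equal to the image $\overline f\in\lim[W_n,Y]$ of $f$. Lifting $\overline g$ through the Milnor surjection $[\Sigma X,B]\to\lim[W_n,B]$ yields $g\in[\Sigma X,B]$ such that $\varphi_*g$ and $f$ have the same image in $\lim[W_n,Y]$, whence $f-\varphi_*g$ is an absolute phantom map $p\in\limone[\Sigma W_n,Y]=\Ph(\Sigma X,Y)$. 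Step two: from the fibration sequence one gets the exact sequence $[\Sigma W_n,B]\to[\Sigma W_n,Y]\to[W_n,F]$, so the cokernel of $\varphi_*\colon[\Sigma W_n,B]\to[\Sigma W_n,Y]$ injects into the finite set $[W_n,F]$ and is finite; since $\limone$ of a tower of finite groups vanishes, $\limone[\Sigma W_n,B]\to\limone[\Sigma W_n,Y]$ is surjective, so $p=\varphi_*h$ for some $h$. Using the co-$H$-structure of $\Sigma X$ we conclude $f=\varphi_*(g+h)$, which lifts through $\varphi$; that is, $f$ is trivial.

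The main obstacle is the finiteness of $[W_n,F]$ and $[\Sigma W_n,F]$ and the finiteness of the lift-sets $L_n$. This is precisely where both hypotheses are consumed: the simple connectivity of $\Sigma X$ allows the passage to the simply connected cover $\widetilde F$, so that a possibly infinite $\pi_1(F)$ causes no trouble, while the rational hypothesis forces $\pi_n(F)$ to be finite for $n\ge2$. Once this finiteness is established, the vanishing of $\limone$ of finite groups and the nonemptiness of inverse limits of towers of nonempty finite sets make both steps essentially formal.
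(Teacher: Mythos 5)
Your argument is correct, but it takes a genuinely different route from the paper's. The paper's proof is a two-line reduction: the homotopy fiber $F$ of $\varphi$ has $\pi_n(F)\otimes\Q=0$ for $n\ge 2$ (and finitely generated $\pi_n$), so $\Ph(X,F)=*$ by Corollary \ref{Ph=0}, and Lemma \ref{suspension triviality 3} --- which rests on the adjunction $\Ph(\Sigma X,\varphi)\cong\Ph(X,\Omega\varphi)$ and the criterion that $f$ is trivial if and only if $\delta\circ\mathrm{ad}(f)$ is null homotopic for the connecting map $\delta\colon\Omega Y\to F$ --- then finishes, since $\delta\circ\mathrm{ad}(f)$ is an absolute phantom map into $F$. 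You never invoke that reduction; instead you build the lift explicitly in two stages: a compactness argument on the towers of nonempty finite lift-sets $L_n$ produces $g\in[\Sigma X,B]$ with $\pi_Y(\varphi_*g)=\pi_Y(f)$, and the surjectivity of $\varphi_*\colon\limone[\Sigma W_n,B]\to\limone[\Sigma W_n,Y]$, forced by the finiteness of the cokernels embedded in $[W_n,F]$, absorbs the remaining phantom discrepancy. Both proofs ultimately consume the same finiteness input (homotopy sets of finite complexes into the rationally trivial fiber are finite, via the Mittag--Leffler mechanism), but yours trades the paper's slick fibration-sequence criterion for more bookkeeping, in exchange for actually exhibiting the lift. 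Two small points you should make explicit: the finiteness of $L_n$ requires the obstruction-theoretic count of vertical homotopy classes of sections over the simply connected $W_n$, which kills both the $H^1(-;\pi_1(F))$ and the $\pi_0(F)$ contributions; and in step two, for ``cokernel'' and ``$\limone$ of finite groups'' to make literal sense you need $\varphi_*[\Sigma W_n,B]$ to be normal in $[\Sigma W_n,Y]$ --- this holds because $W_n$ is itself a suspension, so $[\Sigma W_n,Y]$ is abelian (alternatively, rerun the coset-compactness trick from step one).
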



\subsection{Relative triviality}

Any absolute phantom map is obviously a relative phantom map. So relative phantom maps which are trivial relative phantom maps or absolute phantom maps need no special handling. When the source space is a suspension, any sum of such maps can be understood using the existing theory. So we are led to the following definition: a relative phantom map from a suspension $\Sigma X$ to $\varphi\colon B\to Y$ is called {\it relatively trivial} if it is a finite sum of trivial relative phantom maps and absolute phantom maps, and $\Ph(\Sigma X,\varphi)$ is said to be relatively trivial if every relative phantom map from $\Sigma X$ to $\varphi$ is relatively trivial.


\begin{example}
\label{relatively non-trivial}
Let $\varphi\colon B\to Y$ be as in Example \ref{non-trivial_suspension}. Since $Y$ is a torsion space, every phantom map into $Y$ is trivial. Then Example \ref{non-trivial_suspension} shows that there is certainly a relatively non-trivial relative phantom map.
\end{example}

The main theorem of this paper is a further generalization of Theorem \ref{theorem 1} to the relative triviality of relative phantom maps. For a map $\varphi\colon B\to Y$, we put
$$q(\varphi)=\{n\ge 2\,\vert\,\varphi_*\otimes\colon\pi_n(B)\otimes\Q\to\pi_n(Y)\otimes\Q\text{ is not injective}\}.$$
Now we state our main theorem.

\begin{theorem}
[Theorem \ref{criterion_4}]
\label{theorem 2}
Suppose that $B,Y\in\F$ and $H_{n-1}(X;\Q)=0$ for $n\in q(\varphi)$. Then $\Ph(\Sigma X,\varphi)$ is relatively trivial.
\end{theorem}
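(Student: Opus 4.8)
The plan is to reduce the statement to the vanishing of a single derived inverse limit. Write $X=\bigcup_n X_n$ for the skeletal filtration, so that $\Sigma X=\mathrm{hocolim}_n\,\Sigma X_n$, and abbreviate $P_n=[\Sigma X_n,B]$, $Q_n=[\Sigma X_n,Y]$, together with $I_n=\mathrm{im}(\varphi_*\colon P_n\to Q_n)$ and $K_n=\ker(\varphi_*\colon P_n\to Q_n)$. Since each $\Sigma X_n$ is a finite complex and $B,Y\in\F$, all of these are finitely generated groups, and restriction makes them towers. The Milnor exact sequence
$$\ast\to\limone[\Sigma^2X_n,Y]\to[\Sigma X,Y]\xrightarrow{\rho}\lim Q_n\to\ast$$
identifies $\ker\rho$ with the absolute phantoms $\Ph(\Sigma X,Y)$, and by definition $\Ph(\Sigma X,\varphi)=\rho^{-1}(\lim I_n)$; hence $\rho$ restricts to a surjection $\Ph(\Sigma X,\varphi)\twoheadrightarrow\lim I_n$ with kernel $\Ph(\Sigma X,Y)$.

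Next I would pin down what relative triviality means inside this sequence. Using that the Milnor sequence for the target $B$ makes $[\Sigma X,B]\to\lim P_n$ surjective, a relative phantom $f$ is relatively trivial precisely when $\rho(f)$ lies in the image of $\lim P_n\to\lim I_n$: if $f=\varphi_*\tilde g+p$ with $p$ an absolute phantom then $\rho(f)$ visibly comes from $\lim P_n$, and conversely any element of that image is $\rho$ of a globally liftable map $t$, whence $f-t$ is an absolute phantom. Since $\rho$ carries $\Ph(\Sigma X,\varphi)$ onto all of $\lim I_n$, the whole set $\Ph(\Sigma X,\varphi)$ is relatively trivial if and only if $\lim P_n\to\lim I_n$ is surjective. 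Feeding the short exact sequence of towers $\ast\to K_n\to P_n\to I_n\to\ast$ into the six term $\lim$--$\limone$ sequence, this surjectivity is guaranteed once $\limone K_n=\ast$, which becomes the sole goal.

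To prove $\limone K_n=\ast$ I would show the tower $\{K_n\}$ is Mittag--Leffler by a rational computation. Because a finite suspension is rationally a wedge of spheres, for $B\in\F$ there is a natural isomorphism
$$P_n\otimes\Q\cong\prod_k\mathrm{Hom}\big(\tilde H_{k-1}(X_n;\Q),\pi_k(B)\otimes\Q\big),$$
under which $\varphi_*\otimes\Q$ acts through the coefficient maps $\pi_k(B)\otimes\Q\to\pi_k(Y)\otimes\Q$. Exactness of $-\otimes\Q$ then gives $K_n\otimes\Q\cong\prod_{k}\mathrm{Hom}(\tilde H_{k-1}(X_n;\Q),\ker_k)$, where $\ker_k$ is nonzero exactly when $k\in q(\varphi)$. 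For $k\in q(\varphi)$ the hypothesis $H_{k-1}(X;\Q)=0$ forces $\tilde H_{k-1}(X_n;\Q)=0$ whenever $n\ge k$, so the only possibly surviving summand at level $n$ is the top one $k=n+1$; in particular each bonding map $K_{n+1}\otimes\Q\to K_n\otimes\Q$ vanishes, being supported in a degree that restricts to $\tilde H_{n+1}(X_n;\Q)=0$. Thus every composite $K_m\to K_n$ with $m>n$ is rationally trivial, so its image is a finitely generated, rationally trivial, hence finite subgroup of $K_n$; these images form a descending chain of finite subgroups and therefore stabilize. This is the Mittag--Leffler condition, giving $\limone K_n=\ast$ and completing the reduction.

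The main obstacle, and the step I would spend the most care on, is the rational splitting of $P_n$ together with its naturality in $\varphi$: one must know both that maps out of a suspension are rationally ``additive'' over the homology of the source and that under this identification $\varphi_*$ is diagonal with kernel governed exactly by $q(\varphi)$. Once that is in hand, the degreewise bookkeeping relating $q(\varphi)$ to the vanishing of $\tilde H_{k-1}(X_n;\Q)$ is routine. A secondary technical point is that the groups $P_n$ need not be abelian, so the six term sequence and the ``image is finite'' argument should be run in the based-set version of $\limone$; the Mittag--Leffler conclusion $\limone K_n=\ast$ is unaffected.
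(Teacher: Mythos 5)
Your reduction coincides with the paper's up to the point where everything hinges on $\limone K_n=*$: the pullback description of $\Ph(\Sigma X,\varphi)$ with kernel $\Ph(\Sigma X,Y)$ and image $\lim\varphi_*[\Sigma X^n,B]$, the characterization of relative triviality as surjectivity of $\lim[\Sigma X^n,B]\to\lim\varphi_*[\Sigma X^n,B]$, and the passage through the six-term $\lim$--$\limone$ sequence are exactly Proposition \ref{Ker_I}, the lemma following it, and Proposition \ref{criterion_1} with Corollary \ref{criterion_2}. Where you genuinely diverge is in proving $\limone K_n=*$. The paper converts that condition into the surjectivity of $\delta_*\colon\Ph(X,\Omega Y)\to\Ph(X,F)$ for the connecting map $\delta$ of the fibration $F\to B\xrightarrow{\varphi}Y$ (Lemma \ref{criterion_3}), and deduces this surjectivity from the Roitberg--Touhey formula $\Ph(X,W)\cong\prod_n H^n(X;\pi_{n+1}(W)\otimes\widehat{\Z}/\Z)/[X,\Omega\widehat{W}]$, the fact that $\widehat{\Z}/\Z$ is a $\Q$-vector space, and the identity $q(\varphi)=\hat{q}(\delta)$. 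You instead show the tower $\{K_n\}$ is Mittag--Leffler by computing its rationalization from the rational splitting of a finite suspension into spheres. Your route is more elementary and self-contained (no profinite completions), at the price of bookkeeping that the paper's naturality argument sidesteps.

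The one step you should not call routine is the vanishing of the bonding maps $K_{n+1}\otimes\Q\to K_n\otimes\Q$. The decomposition $[\Sigma X^n,B]\otimes\Q\cong\prod_k\mathrm{Hom}(\tilde{H}_{k-1}(X^n;\Q),\pi_k(B)\otimes\Q)$ is natural in the target, since post-composition with $\varphi$ really is diagonal; but it is \emph{not} natural in the source. Restriction along $\Sigma X^n\to\Sigma X^{n+1}$ is pre-composition with a map of rational wedges of spheres, and by Hilton's theorem such a map mixes degrees through Whitehead products, so ``supported in a degree that restricts to $\tilde{H}_{n+1}(X^n;\Q)=0$'' is not by itself a proof. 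The argument does close up: the only surviving factor of $K_{n+1}\otimes\Q$ is carried by the top spheres $S^{n+2}$, the linear Hilton terms land in $\pi_j(S^{n+2})\otimes\Q=0$ for $j\le n+1$, and every Whitehead-product term either involves a wedge summand on which the class vanishes or factors through a sphere of dimension at least $2(n+2)-1>n+1$; but this needs to be said. You should also note that, since $\Sigma X^n$ is simply connected, you may replace $B$ and $Y$ by their universal covers, so that the localization theory for nilpotent spaces applies and the groups involved are finitely generated nilpotent with exact rationalization; the class $\F$ imposes no condition on $\pi_1$.
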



\subsection{Back to triviality}

When the source space is not a suspension, we cannot consider the relative non-triviality of relative phantom maps. However, when $X$ is not a suspension and any absolute phantom map $X\to Y$ is trivial, the (non-)triviality of relative phantom maps out of $X$ is still our object to study. Let $s_n\colon B\to B_n$ be the $n$-th Postnikov section of a space $B$. Then it is well known that any absolute phantom map into $B_n$ is trivial, so we will study the following problem.

\begin{problem}
\label{generalized-top-dBE}
Find whether or not there is a non-trivial relative phantom map to the Postnikov section $s_n \colon B\to B_n$.
\end{problem}

Of course, the methods developed for relative phantom maps out of a suspension do not apply to Problem \ref{generalized-top-dBE} if the source space is not a suspension. However by a sophisticated consideration on $\limone$, one can prove the following. Put
$$q(B)=\{n\,\vert\,\pi_n(B)\otimes\Q\ne 0\}.$$

\begin{theorem}
\label{theorem 3}
Suppose that $B\in\F$ is nilpotent or has torsion annihilators (see Definition \ref{torsion_annihilator}). If $H_k(X;\Q)=0$ for $k\in q(B)$, then $\Ph(X,s_n)$ is trivial. 
\end{theorem}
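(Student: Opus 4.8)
The plan is to convert the lifting problem that defines triviality into a statement about the homotopy fibre of $s_n$, and then to control the resulting obstruction by a $\limone$ term that the rational hypothesis annihilates. Write $F$ for the homotopy fibre of $s_n\colon B\to B_n$, so that $F$ is $n$-connected with $\pi_j(F)\cong\pi_j(B)$ for $j>n$ and $\pi_j(F)=0$ for $j\le n$; in particular $F\in\F$ and
$$q(F)=\{\,j>n\mid \pi_j(B)\otimes\Q\ne0\,\}\subseteq q(B),$$
so the hypothesis $H_k(X;\Q)=0$ for $k\in q(B)$ already gives $H_k(X;\Q)=0$ for $k\in q(F)$. A relative phantom map $f\colon X\to B_n$ is trivial precisely when it lies in the image of $(s_n)_*\colon[X,B]\to[X,B_n]$, and by definition of a relative phantom map each restriction $f|_{X_m}$ to the $m$-skeleton already admits such a lift; the goal is to assemble these local lifts into a global one.

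First I would apply $[X_m,-]$ to the fibre sequence $\Omega B_n\to F\to B\xrightarrow{s_n}B_n$ and record the resulting exact sequence of pointed sets, together with the action of $[X_m,\Omega B_n]$ on $[X_m,F]$ whose orbits are the fibres of $(s_n)_*$. Running this over the skeletal tower $\{X_m\}$ produces an inverse system of nonempty sets $L_m$ of homotopy classes of lifts of $f|_{X_m}$, each a quotient of a torsor under $[X_m,F]$. Since $\mathrm{map}(X,B)=\mathrm{holim}_m\mathrm{map}(X_m,B)$, a compatible family in $\lim_m L_m$ assembles to a genuine lift of $f$, so $f$ is trivial as soon as $\lim_m L_m\ne\emptyset$, and the obstruction to this is $\limone$ of the groups governing the tower, namely $\limone_m[X_m,F]$ (with the $[X_m,\Omega B_n]$-action taken into account). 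This is the analogue, for a non-suspension source and the fibre $F$, of the relative Milnor sequence used in the suspension case, and I would phrase it so as to reuse that bookkeeping.

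The heart of the argument is to show $\limone_m[X_m,F]=*$. I would filter $F$ by a refinement of its Postnikov tower, whose successive fibres are Eilenberg--MacLane spaces $K(\pi_j(F),j)$ with $\pi_j(F)=\pi_j(B)$ finitely generated for $j>n$. Since the relative phantom condition makes the existence obstructions over each skeleton vanish, the stage-$j$ tower of torsors has structure groups $H^j(X_m;\pi_j(F))$, whose free part is $\mathrm{Hom}(H_j(X_m;\Q),\pi_j(F)\otimes\Q)$; this vanishes whenever $\pi_j(F)\otimes\Q\ne0$, because such $j$ lie in $q(F)\subseteq q(B)$ where $H_j(X;\Q)=0$. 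Thus each stagewise tower is a tower of finitely generated abelian groups with vanishing rational part, hence of finite torsion type. I would then invoke nilpotence of $B$ (to trivialize the action and make the refinement principal) or the torsion-annihilator hypothesis (Definition \ref{torsion_annihilator}, to bound this torsion uniformly) to conclude that each stagewise tower is Mittag--Leffler, and assemble the stages, via the Milnor sequence for the associated tower of fibrations, into the single conclusion $\limone_m[X_m,F]=*$.

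The main obstacle is exactly this last step: the tower is genuinely non-abelian and nonlinear. For a non-suspension $X$ one cannot pass to $[\Sigma X_m,-]$ to linearize as in Theorem \ref{theorem 2}; the Postnikov stages are glued by $k$-invariants and by a possibly nontrivial $\pi_1$-action, and the Postnikov tower of $F$ may be infinite, so the stagewise analysis must be consolidated into one Mittag--Leffler statement. The nilpotent hypothesis is what trivializes the action and makes the refinement principal, while the torsion-annihilator hypothesis is the substitute that still forces Mittag--Leffler when $B$ is not nilpotent. Finally, I would emphasize that a finite Postnikov section $B_n$ may itself admit nontrivial absolute phantom maps, so there is no absolute phantom summand to discard: the conclusion must be an honest lift of $f$, which is precisely what the vanishing $\limone_m[X_m,F]=*$ delivers.
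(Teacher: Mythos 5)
Your overall shape (reduce to a Mittag--Leffler/compactness statement over the skeletal tower, with the rational hypothesis plus nilpotence or torsion annihilators supplying finiteness) points in the right direction, but two load-bearing steps fail as stated. First, your closing remark that a finite Postnikov section $B_n$ ``may itself admit nontrivial absolute phantom maps'' is false: by Proposition \ref{absolute_Ph_dual} we have $\Ph(X,B_n)=*$ for every $X$ (Corollary \ref{Ph=0}(1)). This is not a side issue --- it is the key simplification you are missing. It makes $\pi_{B_n}\colon[X,B_n]\to\lim\,[X^k,B_n]$ injective, hence bijective from $\Ph(X,s_n)$ onto $\lim\,(s_n)_*[X^k,B]$, so triviality of $\Ph(X,s_n)$ is \emph{equivalent} to surjectivity of $\lim\,[X^k,B]\to\lim\,(s_n)_*[X^k,B]$. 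No $\limone$ and no fibrewise obstruction theory are needed at all.

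Second, the $\limone[X^m,F]$ framework you propose is not actually available here: $X^m$ is not a suspension and $F$ is not a loop space, so $[X^m,F]$ is only a pointed set, and the set $L_m$ of lifts of $f|_{X^m}$ is the fibre of $(s_n)_*$ over a non-basepoint class --- an orbit of an action of $[X^m,\Omega B_n]$, not a torsor under $[X^m,F]$. You correctly identify the assembly of the Postnikov stages into one Mittag--Leffler statement as ``the main obstacle,'' but you then simply assert that nilpotence or torsion annihilators resolve it; that assertion is essentially the whole theorem. The paper's route is different and cleaner: it proves that each entire set $[X^k,B]$ is \emph{finite} (Lemmas \ref{finite1} and \ref{finite2}; the latter needs a genuine trick, namely composing self-maps of $B$ furnished by the torsion-annihilator hypothesis so as to annihilate the obstruction class in $H^n(X^k;\pi_n(B))$ and reduce to maps into a torsion space), and then invokes the elementary fact that an inverse limit of surjections of finite sets is surjective (Lemma \ref{surj}). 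Your stagewise finiteness sketch gestures at the same rational vanishing, but without the finiteness of the full mapping sets and the compactness argument, the non-abelian, non-principal tower you describe cannot be closed up.
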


We finally return to the topological de Bruijn-Erd\H{o}s problem. Since the inclusion $\R P^n\to\R P^\infty$ is the first Postnikov section of $\R P^n$, Problem \ref{generalized-top-dBE} is actually a generalization of the topological de Bruijn-Erd\H{o}s problem. Since $\R P^n$ is nilpotent for $n$ odd, one gets an answer to the topological de Bruijn-Erd\H{o}s problem by Theorem \ref{theorem 3}.

\begin{corollary}
[Corollary \ref{dBE_answer}]
\label{corollary 1}
If $n$ is odd and $H_n(X ; \Q) = 0$, then $\Ph(X, i_n)$ is trivial.
\end{corollary}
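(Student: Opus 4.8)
The plan is to recognize the inclusion $i_n$ as the first Postnikov section of $\R P^n$ and then to invoke Theorem \ref{dBE_general}. Throughout I would take $n$ odd with $n\ge 3$ (the dimension $1$ case being degenerate, since $\R P^1=S^1$). First I would recall that for $n\ge 2$ the universal cover of $\R P^n$ is $S^n$, so that $\pi_1(\R P^n)\cong\Z/2$ and $\pi_k(\R P^n)\cong\pi_k(S^n)$ for $k\ge 2$. In particular all homotopy groups of $\R P^n$ are finitely generated, whence $\R P^n\in\F$; and since $\R P^\infty=K(\Z/2,1)$ also lies in $\F$, the map $i_n\colon\R P^n\to\R P^\infty$ is a legitimate target map for relative phantom maps.

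Next I would identify $i_n$ with the first Postnikov section $s_1\colon\R P^n\to K(\pi_1(\R P^n),1)=\R P^\infty$. Because $[\R P^n,\R P^\infty]\cong H^1(\R P^n;\Z/2)\cong\Z/2$, and because both $i_n$ and $s_1$ induce isomorphisms on $\pi_1$, they represent the same nonzero homotopy class, so $i_n\simeq s_1$. Thus $\Ph(X,i_n)=\Ph(X,s_1)$, and it suffices to apply Theorem \ref{dBE_general} to $B=\R P^n$ with Postnikov degree $1$.

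To do so I must check that $B=\R P^n$ is nilpotent and compute $q(\R P^n)$. For nilpotency, I would observe that the nontrivial deck transformation of $S^n\to\R P^n$ is the antipodal map $a$, whose degree is $(-1)^{n+1}=1$ since $n$ is odd; hence $a$ is homotopic to the identity and acts trivially on every $\pi_k(S^n)$. Consequently $\pi_1(\R P^n)$ acts trivially on $\pi_k(\R P^n)\cong\pi_k(S^n)$, so $\R P^n$ is simple, in particular nilpotent. For $q(\R P^n)$, I would use that for $n$ odd the rational homotopy of $S^n$ is $\pi_k(S^n)\otimes\Q=\Q$ for $k=n$ and $0$ otherwise; combined with $\pi_k(\R P^n)\cong\pi_k(S^n)$ for $k\ge 2$ this yields $q(\R P^n)=\{n\}$.

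Finally, the hypothesis $H_k(X;\Q)=0$ for $k\in q(B)$ of Theorem \ref{dBE_general} becomes exactly $H_n(X;\Q)=0$, which is assumed, so $\Ph(X,s_1)=\Ph(X,i_n)$ is trivial. I expect no serious obstacle: the only steps needing care are the homotopy-theoretic identification $i_n\simeq s_1$ and keeping the two roles of $n$ distinct, namely the dimension $n$ of $\R P^n$ versus the Postnikov degree, which here is $1$. Both the nilpotency of $\R P^n$ and the value of $q(\R P^n)$ ultimately rest on the single elementary fact that the antipodal map of an odd-dimensional sphere has degree one.
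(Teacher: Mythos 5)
Your proof is correct and follows exactly the paper's route: the paper likewise deduces the corollary from Theorem \ref{dBE_general} by observing that $i_n$ is the first Postnikov section of $\R P^n$ and that $\R P^n$ is nilpotent for odd $n$, with $q(\R P^n)=\{n\}$ turning the hypothesis into $H_n(X;\Q)=0$. You merely supply the details (the identification $i_n\simeq s_1$, simplicity of $\R P^n$ via the degree of the antipodal map, and the computation of $q(\R P^n)$) that the paper leaves implicit.
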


We will construct a space $X(n)$ such that $H_n(X(n);\Q)\ne 0$ and there is a non-trivial phantom map from $X(n)$ to $i_n$. Then Corollary \ref{corollary 1} will turn out to be the optimal answer to the topological de Bruijn-Erd\H{o}s problem in terms of the rational homology of the source space.

\emph{Acknowledgement:} The authors were supported respectively by JSPS KAKENHI (No. 26400094), JSPS KAKENHI (No. 25400087), and JSPS KAKENHI (No. 28-6304). The authors are grateful to the anonymous referee for useful comments.


\section{Relative phantom maps and inverse limits}


\subsection{$\lim$ and $\limone$ of groups}

In this subsection, we recall the definition of $\lim$ and $\limone$ of the inverse system of groups, not necessarily abelian. Let
$$G_0\xleftarrow{f_0}G_1\xleftarrow{f_1}\cdots\xleftarrow{f_{n-1}}G_n\xleftarrow{f_{n}}\cdots$$
be an inverse system of groups, and define the left action of $\prod_{n=0}^\infty G_n$ on itself by
$$(g_0,\ldots,g_n,\ldots)\cdot(x_0,\ldots,x_n,\ldots)=(g_0x_0f_0(g_1)^{-1},\ldots,g_nx_nf_n(g_{n+1})^{-1},\ldots).$$
Then $\lim\,G_n$ and $\limone G_n$ are defined by the isotropy subgroup of $\prod_{n=0}^\infty G_n$ at $(1,1,\ldots)\in \prod_{n=0}^\infty G_n$ and the orbit space of this action, respectively. By definition, $\lim\,G_n$ is a group but $\limone G_n$ is just a pointed set in general whose basepoint is the orbit containing $(1,1, \cdots)$. However, if every $G_n$ is abelian, then $\limone G_n$ has a natural abelian group structure.

Next we recall the 6-term exact sequence (Lemma \ref{6-term}) involving $\lim$ and $\limone$ which will be useful. For a basepoint preserving map $h\colon S\to T$ between pointed sets, we write $\mathrm{Im}\,h$ and $\mathrm{Ker}\,h$ to mean $h(S)$ and $h^{-1}(*)$, respectively. Recall that a sequence of pointed sets $A\xrightarrow{f}B\xrightarrow{g}C$ is exact if $\mathrm{Im}\,f=\mathrm{Ker}\,g$. If $A,B,C$ are groups and $f,g$ are group homomorphisms, the exactness coincides with that of groups.

\begin{lemma}
\label{6-term}
Let $1\to\{G_n\}\to\{H_n\}\to\{K_n\}\to 1$ be an exact sequence of inverse systems of groups. Then there is a natural exact sequence of pointed sets:
$$1\to\lim\,G_n\to\lim\,H_n\to\lim\,K_n\to\limone G_n\to\limone H_n\to\limone K_n\to*$$
\end{lemma}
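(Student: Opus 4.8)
The plan is to prove this by the classical explicit construction of the non-abelian $\lim$--$\limone$ sequence, working throughout with the orbit description recalled above. Write the short exact sequence of systems as $1\to G_n\xrightarrow{i_n}H_n\xrightarrow{p_n}K_n\to 1$, with bond maps $f_n$ in each system compatible with $i_n,p_n$. The maps $\lim G_n\to\lim H_n\to\lim K_n$ and $\limone G_n\to\limone H_n\to\limone K_n$ are those induced by $\prod i_n$ and $\prod p_n$ on the isotropy subgroups and on the orbit spaces, respectively; these are evidently pointed maps, and the relevant composites are trivial since $p_ni_n=1$. The only map requiring genuine construction is the connecting map $\delta\colon\lim K_n\to\limone G_n$.

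To build $\delta$, I would take $(k_n)\in\lim K_n$, use surjectivity of $p_n$ to choose lifts $h_n\in H_n$ with $p_n(h_n)=k_n$, and observe that the compatibility $k_n=f_n(k_{n+1})$ forces $p_n(h_nf_n(h_{n+1})^{-1})=1$, so that $h_nf_n(h_{n+1})^{-1}=i_n(g_n)$ for a unique $g_n\in G_n$. I would then set $\delta((k_n))=[(g_n)]\in\limone G_n$. Well-definedness, that is, independence of the chosen lifts, is a direct computation: replacing each $h_n$ by $i_n(a_n)h_n$ replaces $(g_n)$ by $(a_ng_nf_n(a_{n+1})^{-1})$, which is exactly the left action of $(a_n)$, and so leaves the orbit unchanged.

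The bulk of the work is verifying exactness at each of the six spots by unwinding the orbit relation $y_n=g_nx_nf_n(g_{n+1})^{-1}$ directly. Exactness at $\lim G_n$ (injectivity) and at $\lim H_n$ follows from injectivity of each $i_n$ together with compatibility of the bond maps, while surjectivity of $\limone H_n\to\limone K_n$ (giving exactness at $\limone K_n$) follows by lifting a representative componentwise through $p_n$. For exactness at $\lim K_n$ I would show that if $\delta((k_n))=\ast$, witnessed by $(g_n)=(a_nf_n(a_{n+1})^{-1})$, then the corrected lifts $i_n(a_n)^{-1}h_n$ satisfy $h_n'=f_n(h_{n+1}')$ and hence assemble into an element of $\lim H_n$ mapping to $(k_n)$. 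Exactness at $\limone G_n$ and at $\limone H_n$ are the two places where $\delta$ and a change of representative interact: at $\limone G_n$, a class $[(g_n)]$ dying in $\limone H_n$ is witnessed by $i_n(g_n)=b_nf_n(b_{n+1})^{-1}$, and then $(p_n(b_n))$ lies in $\lim K_n$ with $\delta((p_n(b_n)))=[(g_n)]$; at $\limone H_n$, a class killed in $\limone K_n$ is moved by the action of a componentwise lift of the witnessing $K$-tuple into the image of $\limone G_n$.

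I expect the main obstacle to be bookkeeping in the non-abelian setting: because the $G_n,H_n,K_n$ need not be abelian, $\limone$ carries no group structure and \emph{exactness} means only that each image equals the preimage of the basepoint, so I must avoid differences or inverses of classes and instead track everything through the explicit left action. Keeping the order of multiplication consistent in the identities $h_nf_n(h_{n+1})^{-1}=i_n(g_n)$ and in the corrected-lift computations is precisely where errors would creep in. Naturality in the short exact sequence of systems is then routine, since every map was defined by a construction functorial in $\{G_n\},\{H_n\},\{K_n\}$.
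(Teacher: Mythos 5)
Your proposal is correct: it is the standard explicit construction of the non-abelian $\lim$--$\limone$ six-term sequence, with the connecting map $\delta$ defined via componentwise lifts and the identity $i_n(g_n)=h_nf_n(h_{n+1})^{-1}$, and the exactness verifications you outline (in particular the corrected lifts $i_n(a_n)^{-1}h_n$ at $\lim K_n$ and the action by a lifted $K$-tuple at $\limone H_n$) are exactly the right computations. The paper itself offers no proof of this lemma---it is recalled as a classical fact---so there is no alternative argument to compare against; your write-up simply supplies the standard verification that the paper leaves implicit.
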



\subsection{Absolute phantom maps}

Recall that a map $f\colon X \rightarrow Y$ from a CW-complex is a phantom map if the restriction of $f$ to any finite subcomplex of $X$ is null homotopic. Then $f\colon X\to Y$ is a phantom map if and only if the composite $f\circ g$ is null homotopic for any map $g\colon K\to X$ from a finite complex $K$. This implies that the definition of a phantom map does not depend on a particular cell structure of the source space. Hereafter, we will assume that the source space of a phantom map is a CW-complex of finite type. Then in particular, $f\colon X\to Y$ is a phantom map if and only if the restriction of $f$ to any finite dimensional skeleton of $X$ is null homotopic. As mentioned in Section 1, we will call a usual phantom map an absolute phantom map to distinguish it from relative phantom maps. Let $\Ph(X,Y)$ denote the set of homotopy classes of absolute phantom maps from $X$ to $Y$.

Let $X^n$ denote the $n$-skeleton of a CW-complex $X$. By the Milnor exact sequence (see \cite{BK})
\begin{equation}
\label{milnor}
*\to\limone[\Sigma X^n,Y]\to[X,Y]\xrightarrow{\pi_Y} \lim\,[X^n,Y]\to*
\end{equation}
we have the following description of $\Ph(X,Y)$ by $\limone$.

\begin{proposition}
\label{absolute_Ph}
There is an isomorphism of pointed sets
$$\Ph(X,Y)\cong\limone[\Sigma X^n,Y],$$
which is a group isomorphism whenever $X$ is a suspension.
\end{proposition}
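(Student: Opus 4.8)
The plan is to derive the description of $\Ph(X,Y)$ directly from the Milnor exact sequence \eqref{milnor}, which is already available to us. The key observation is to identify the two pointed sets appearing in that sequence with the objects of interest. First I would analyze the map $\pi_Y \colon [X,Y] \to \lim [X^n,Y]$, which sends a homotopy class $f$ to the coherent sequence $([f|_{X^n}])_n$ of restrictions to the skeleta. By the very definition of an absolute phantom map (in the skeletal formulation, which the excerpt notes coincides with the finite-complex formulation since $X$ has finite type), $f$ is a phantom map precisely when every restriction $f|_{X^n}$ is null homotopic, i.e.\ when $\pi_Y(f) = (\ast,\ast,\ldots)$, the basepoint of $\lim [X^n,Y]$. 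Hence $\Ph(X,Y) = \mathrm{Ker}\,\pi_Y$ as a pointed subset of $[X,Y]$.

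Next I would invoke exactness of \eqref{milnor} at the term $[X,Y]$: exactness says that $\mathrm{Ker}\,\pi_Y$ equals the image of the map $\limone[\Sigma X^n,Y] \to [X,Y]$, and since this latter map is injective (it is the inclusion of the kernel of $\pi_Y$, as the sequence begins with $\ast \to \limone[\Sigma X^n,Y]$), it restricts to a bijection of pointed sets $\limone[\Sigma X^n,Y] \xrightarrow{\ \cong\ } \mathrm{Ker}\,\pi_Y = \Ph(X,Y)$. This gives the desired isomorphism of pointed sets.

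For the final clause, I would argue that when $X = \Sigma X'$ is a suspension, the groups $[\Sigma X^n, Y]$ in the inverse system carry natural (abelian) group structures, and more to the point the inverse system can be taken to consist of groups with group homomorphisms as bonding maps; then $\limone$ of an inverse system of groups carries the pointed-set structure we defined in the excerpt, and the bijection above is compatible with the relevant group operations. The cleanest way is to note that $\Ph(\Sigma X', Y)$ inherits its group structure from the co-H-space structure on $\Sigma X'$ (equivalently, the addition on $[\Sigma X', Y]$), and that under the identification this matches the natural group structure on $\limone$ coming from the tower of groups $[\Sigma(\Sigma X')^n, Y] = [\Sigma^2(\cdots), Y]$, which are abelian. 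I would then verify that the bijection from the Milnor sequence is a group homomorphism, which follows from naturality of the Milnor sequence with respect to the maps inducing the group operations.

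The main obstacle I anticipate is the last clause: making precise that the pointed-set isomorphism is a group isomorphism in the suspension case. This requires checking that the group structure on $\Ph(\Sigma X',Y) \subseteq [\Sigma X', Y]$ (induced by the comultiplication) is carried to the abelian group structure on $\limone$ that the excerpt defined for inverse systems of \emph{abelian} groups. The subtlety is that $\limone$ of a tower of nonabelian groups is only a pointed set, so one must genuinely use that a double suspension (or rather that $\Sigma X^n$ is itself a suspension) makes the relevant tower abelian, and then confirm that the Milnor-sequence bijection intertwines the two operations --- this is essentially a naturality argument, but it is where all the real content of the group-isomorphism statement lies.
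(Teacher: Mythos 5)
Your argument is correct and is essentially the paper's own: the paper derives this proposition directly from the Milnor exact sequence \eqref{milnor} by identifying $\Ph(X,Y)$ with $\mathrm{Ker}\,\pi_Y$, exactly as you do. Your additional care about the group structure in the suspension case (that the tower $[\Sigma(\Sigma X')^n,Y]$ consists of abelian groups, so $\limone$ is a group, and the bijection is compatible by naturality) is a reasonable elaboration of what the paper leaves implicit.
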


We can dualize this proposition by considering the Postnikov tower of the target space, where the proof is omitted. Let $Y_n$ denote the $n$-th Postnikov section of $Y$, and let $Y_1 \leftarrow Y_2 \leftarrow \cdots \leftarrow Y_n \leftarrow\cdots$ be the Postnikov tower of $Y$.

\begin{proposition}
\label{absolute_Ph_dual}
There is an isomorphism of pointed sets
$$\Ph(X,Y)\cong\limone[X,\Omega Y_n],$$
which is a group isomorphism whenever $X$ is a suspension.
\end{proposition}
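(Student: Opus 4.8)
The plan is to dualize the proof of Proposition~\ref{absolute_Ph}, replacing the skeletal filtration of the source $X$ by the Postnikov tower of the target $Y$. Since the Postnikov tower $Y_1\leftarrow Y_2\leftarrow\cdots$ is a tower of fibrations with $Y\simeq\lim Y_n$, the Bousfield--Kan description of the homotopy type of the space of maps into a homotopy inverse limit (\cite{BK}, dual to \eqref{milnor}) yields a Milnor exact sequence of pointed sets
$$*\to\limone[X,\Omega Y_n]\to[X,Y]\xrightarrow{\pi}\lim\,[X,Y_n]\to*,$$
where $\pi$ is induced by the Postnikov projections $p_n\colon Y\to Y_n$ and we use the adjunction identifications $\pi_1(\mathrm{map}(X,Y_n))\cong[X,\Omega Y_n]$ and $\pi_0(\mathrm{map}(X,Y_n))\cong[X,Y_n]$. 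The groups $[X,\Omega Y_n]$ form an inverse system of (not necessarily abelian) groups, so $\limone[X,\Omega Y_n]$ is defined as a pointed set; when $X$ is a suspension these groups are abelian and all maps in the sequence are homomorphisms.

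The heart of the argument is to identify $\Ph(X,Y)$ with $\mathrm{Ker}\,\pi$, that is, to prove that $f\colon X\to Y$ is an absolute phantom map if and only if $p_n\circ f$ is null homotopic for every $n$. For the ``only if'' direction I would note that if $f$ is phantom then $g:=p_n\circ f$ is null homotopic on $X^{n+1}$, hence factors up to homotopy through the cofiber $X/X^{n+1}$, which is $(n+1)$-connected; since $\pi_i(Y_n)=0$ for $i>n$, a cell-by-cell obstruction argument shows that every map from an $(n+1)$-connected complex to $Y_n$ is null homotopic, whence $g\simeq *$. For the ``if'' direction, given $k$ I would choose $n\ge k$ and use that the fiber of $p_n$ is $n$-connected, so that $p_n$ is an $(n+1)$-equivalence and $p_{n*}\colon[X^k,Y]\to[X^k,Y_n]$ is a bijection; as $(p_n\circ f)|_{X^k}\simeq *$ this forces $f|_{X^k}\simeq *$, and since $k$ is arbitrary $f$ is phantom.

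Combining these steps, the exactness of the displayed sequence gives $\Ph(X,Y)=\mathrm{Ker}\,\pi\cong\limone[X,\Omega Y_n]$ as pointed sets, and as a group isomorphism when $X$ is a suspension. I expect the main obstacle to be the equivalence ``$f$ phantom $\iff$ $p_n\circ f\simeq *$ for all $n$'', and specifically making the two obstruction-theoretic vanishing statements precise; by contrast, once $Y$ is presented as $\lim Y_n$ the construction of the dual Milnor sequence is routine.
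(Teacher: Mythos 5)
Your proof is correct and follows exactly the route the paper indicates (the paper omits the proof, saying only that one ``dualizes by considering the Postnikov tower of the target''): you form the dual Milnor sequence from the tower of fibrations $\{Y_n\}$ and identify $\Ph(X,Y)$ with the kernel of $[X,Y]\to\lim\,[X,Y_n]$ via the two obstruction-theoretic observations, both of which are standard and correctly handled.
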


We record consequences of the two propositions above on the triviality of $\Ph(X,Y)$ that we are going to use. As in Section 1, let $\F$ denote the class of connected CW-complexes having finitely generated $\pi_n$ for $n\ge 2$.

\begin{corollary}
\label{Ph=0}
\begin{enumerate}
\item If $Y$ is a finite Postnikov section, then $\Ph(X,Y)=*$.
\item If $Y\in\F$ satisfies that $\pi_*(Y)\otimes\Q=0$ for $*\ge 2$, then $\Ph(X,Y)=*$.
\end{enumerate}
\end{corollary}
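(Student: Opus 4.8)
The plan is to prove each part by identifying $\Ph(X,Y)$ with a suitable $\limone$ via Proposition \ref{absolute_Ph} or Proposition \ref{absolute_Ph_dual}, and then showing that this $\limone$ is trivial; the two parts, however, call for different vanishing mechanisms. Throughout I would use the standard fact that a tower of groups satisfying the Mittag--Leffler condition has trivial $\limone$, together with the observation that any tower whose bonding maps are eventually isomorphisms, and any tower of finite groups, is automatically Mittag--Leffler.

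For part (1), I would use the dual description $\Ph(X,Y)\cong\limone[X,\Omega Y_n]$ of Proposition \ref{absolute_Ph_dual}. If $Y$ is a finite Postnikov piece, say $\pi_k(Y)=0$ for $k>N$, then the natural map $Y\to Y_n$ is a homotopy equivalence for every $n\ge N$, so the bonding maps $Y_{n+1}\to Y_n$ of the Postnikov tower are homotopy equivalences for $n\ge N$. Applying $[X,\Omega(-)]$, the tower $\{[X,\Omega Y_n]\}_n$ has bonding maps that are isomorphisms for $n\ge N$, hence is Mittag--Leffler, and therefore $\limone[X,\Omega Y_n]=*$. This gives $\Ph(X,Y)=*$.

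For part (2), the possibility of an infinite $\pi_1(Y)$ makes the dual tower unsuitable, so I would instead use $\Ph(X,Y)\cong\limone[\Sigma X^n,Y]$ from Proposition \ref{absolute_Ph} and show that each group $[\Sigma X^n,Y]$ is finite. The key point is that $\Sigma X^n$ is a simply connected finite complex (here I use that $X$ is of finite type, so that $X^n$ is finite). Since the universal cover $p\colon\tilde Y\to Y$ induces a bijection on based homotopy classes out of the simply connected space $\Sigma X^n$, there is an isomorphism $[\Sigma X^n,Y]\cong[\Sigma X^n,\tilde Y]$. Now $\tilde Y$ is simply connected with $\pi_k(\tilde Y)\cong\pi_k(Y)$ for $k\ge 2$; because $Y\in\F$ these groups are finitely generated, and because $\pi_*(Y)\otimes\Q=0$ for $*\ge 2$ they are finite. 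An induction up the Postnikov tower of $\tilde Y$, using that $H^{k}(\Sigma X^n;\pi_k(\tilde Y))$ is finite for a finite complex $\Sigma X^n$ and a finite group $\pi_k(\tilde Y)$, then shows that $[\Sigma X^n,\tilde Y]$ is finite. Hence $\{[\Sigma X^n,Y]\}_n$ is a tower of finite groups, which is Mittag--Leffler, so $\limone[\Sigma X^n,Y]=*$ and $\Ph(X,Y)=*$.

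The main obstacle I anticipate is the $\pi_1$-issue in part (2): the hypotheses say nothing about $\pi_1(Y)$, which may well be infinite, so one cannot argue finiteness of the mapping sets directly. The device that resolves this is the simple connectivity of the suspension $\Sigma X^n$, which both licenses the reduction to the universal cover $\tilde Y$ and ensures that $\tilde Y$ has finite homotopy groups in \emph{all} degrees. The remaining steps---the obstruction-theoretic finiteness count and the implication that Mittag--Leffler forces $\limone=*$---are routine once this reduction is in place.
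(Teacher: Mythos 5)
Your proposal is correct and follows essentially the same route as the paper: part (1) via Proposition \ref{absolute_Ph_dual} and the stabilization of the Postnikov tower, and part (2) via Proposition \ref{absolute_Ph} together with the finiteness of $[\Sigma X^n,Y]$ and the Mittag--Leffler condition for towers of finite groups. The only difference is that you spell out the finiteness of $[\Sigma X^n,Y]$ (passage to the universal cover and induction up the Postnikov tower), which the paper simply asserts ``by the assumption on $Y$.''
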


\begin{proof}
(1) is immediate from Proposition \ref{absolute_Ph_dual}. (2) For any finite connected complex $A$, the homotopy set $[\Sigma A,Y]$ is a finite set by the assumption on $Y$, and the inverse system of finite groups satisfies the Mittag-Leffler condition (see \cite{M1}). Then $\Ph(X,Y)\cong \limone[\Sigma X^n,Y]=*$.
\end{proof}


\section{Relative phantom maps}

In Section 1, we have defined that a map $X\to Y$ from a CW-complex $X$ is a relative phantom to $\varphi\colon B\to Y$ if the restriction of $f$ to any finite subcomplex of $X$ lifts to $B$ through $\varphi$, up to homotopy. As well as absolute phantom maps, one can see that the definition of a relative phantom map does not depend on a particular cell structure of the source space. Just as for absolute phantom maps, we will always assume that the source space of a relative phantom map is a connected CW-complex of finite type. In particular, $f\colon X\to Y$ is a relative phantom map to $\varphi\colon B\to Y$ if and only its restriction to any finite dimensional skeleton lifts to $B$ through $\varphi$, up to homotopy.

Analogous to the absolute case in Proposition \ref{absolute_Ph_dual}, let us dualize the definition of relative phantom maps. Let $Y_1 \leftarrow Y_2 \leftarrow \cdots \leftarrow Y_n \leftarrow\cdots$ be the Postnikov tower of $Y$ as in the previous section, and let $s_n\colon Y\to Y_n$ be the $n$-th Postnikov section of $Y$. By the naturality of Postnikov towers, a map $\varphi\colon B\to Y$ induces a map $\varphi_n\colon B_n\to Y_n$ between the Postnikov sections satisfying $\varphi_n\circ s_n^B\simeq s_n^Y\circ\varphi$, where $s_n^B$ and $s_n^Y$ are the Postnikov sections of $B$ and $Y$, respectively. 

\begin{proposition}
The following conditions on a map $f \colon X \to Y$ are equivalent:
\begin{enumerate}
\item $f$ is a relative phantom map to $\varphi$;
\item For any $n\ge 0$, $s_n \circ f \colon X \to Y_n$ has a lift with respect to $\varphi_n \colon B_n\to Y_n$, up to homotopy.
\end{enumerate}
\end{proposition}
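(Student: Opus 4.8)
The plan is to translate both conditions into lifting problems and to compare them using two standard facts. First, for any space $Z$ the homotopy fiber of a Postnikov section $s_n\colon Z\to Z_n$ is $n$-connected, so for a complex $A$ with $\dim A\le n$ the induced map $[A,Z]\to[A,Z_n]$ is a bijection (it is injective already for $\dim A\le n$ and surjective for $\dim A\le n+1$). Second, turning $\varphi_n\colon B_n\to Y_n$ into a fibration with homotopy fiber $F_n$, the long exact sequence together with $\pi_k(B_n)=\pi_k(Y_n)=0$ for $k>n$ shows $\pi_k(F_n)=0$ for $k>n$; thus $F_n$ is a finite Postnikov piece. Throughout I will use the relation $\varphi_n\circ s_n^B\simeq s_n^Y\circ\varphi$ recorded above.

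For the implication $(2)\Rightarrow(1)$, I would fix a skeleton $X^m$ and choose $n\ge m$. Restricting a lift $h_n\colon X\to B_n$ of $s_n^Y\circ f$ to $X^m$ gives $\varphi_n\circ(h_n|_{X^m})\simeq s_n^Y\circ f|_{X^m}$. Since $\dim X^m\le n$, the first fact lets me lift $h_n|_{X^m}$ along $s_n^B$ to a map $g_m\colon X^m\to B$ with $s_n^B\circ g_m\simeq h_n|_{X^m}$. Then
$$s_n^Y\circ\varphi\circ g_m\simeq\varphi_n\circ s_n^B\circ g_m\simeq\varphi_n\circ(h_n|_{X^m})\simeq s_n^Y\circ f|_{X^m},$$
and the injectivity of $(s_n^Y)_*$ on $[X^m,Y]$ (again $\dim X^m\le n$) forces $\varphi\circ g_m\simeq f|_{X^m}$. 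Hence $f|_{X^m}$ lifts through $\varphi$ for every $m$, which is condition $(1)$.

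For the converse $(1)\Rightarrow(2)$, fix $n$ and pull the fibration $F_n\to B_n\to Y_n$ back along $s_n^Y\circ f\colon X\to Y_n$ to obtain a fibration $p\colon E\to X$ with fiber $F_n$; a lift of $s_n^Y\circ f$ through $\varphi_n$ is exactly a section of $p$. Condition $(1)$ provides, for each $m$, a lift $g_m\colon X^m\to B$ of $f|_{X^m}$, and then $s_n^B\circ g_m$ is a lift of $s_n^Y\circ f|_{X^m}$ through $\varphi_n$, that is, a section of $p$ over $X^m$. In particular $p$ admits a section over $X^{n+1}$. The crucial point is that $\pi_k(F_n)=0$ for $k>n$: the obstruction to extending a section over a cell of dimension $m+1$ lies in $\pi_m(F_n)$, which vanishes once $m\ge n+1$. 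Extending the section over $X^{n+1}$ cell by cell over all higher cells is therefore unobstructed, producing a genuine section of $p$ over all of $X$ and hence the desired lift $h_n\colon X\to B_n$.

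The main obstacle is this last assembly step: the skeletal lifts furnished by $(1)$ need not be compatible under the inclusions $X^m\hookrightarrow X^{m+1}$, so one cannot simply glue them. The device that circumvents this is passing to the pullback fibration and exploiting that its fiber $F_n$ is a finite Postnikov piece, so that beyond dimension $n$ there are no obstructions at all and a single section over $X^{n+1}$ extends coherently over $X$. I would take care to note that the relevant coefficient groups $\pi_m(F_n)$ are literally zero, so no simplicity or $\pi_1$-action hypotheses on the fibration are needed.
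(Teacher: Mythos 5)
Your argument is correct, and one of its two halves diverges from the paper's. For $(2)\Rightarrow(1)$ you do essentially what the paper does: restrict the lift $h_n$ to a skeleton of dimension at most $n$, use that $(s_n^B)_*\colon[X^m,B]\to[X^m,B_n]$ is bijective in that range to pull the lift back to $B$, and then use injectivity of $(s_n^Y)_*$ on $[X^m,Y]$ to see the result is a genuine lift of $f|_{X^m}$ (the paper runs the same argument with $n+1$ in place of $n$). For $(1)\Rightarrow(2)$ the paper stays at the level of homotopy sets: it lifts $f|_{X^{n+1}}$ to $\tilde f\colon X^{n+1}\to B$, pushes down to $B_n$, extends over all of $X$ via the restriction isomorphism $[X,B_n]\xrightarrow{\cong}[X^{n+1},B_n]$, and then checks separately that the extension is still a lift using $[X,Y_n]\cong[X^{n+1},Y_n]$. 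You instead convert $\varphi_n$ into a fibration, pull it back along $s_n^Y\circ f$, observe that the fiber $F_n$ has vanishing homotopy above dimension $n$, and extend a section from $X^{n+1}$ over all higher cells without obstruction. Both arguments rest on the same vanishing of homotopy above dimension $n$; the paper's version needs two restriction isomorphisms but no fibration replacement, while yours extends a section that is automatically a lift, so no separate verification is needed, at the cost of invoking obstruction theory for sections (which, as you rightly note, is unproblematic here because the relevant coefficient groups are literally zero, so no simplicity hypotheses enter). Your remark that the skeletal lifts supplied by $(1)$ need not be compatible, and that the single section over $X^{n+1}$ is what resolves this, is exactly the right point and is the same coherence issue the paper sidesteps with the isomorphism $[X,B_n]\cong[X^{n+1},B_n]$.
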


\begin{proof}
Suppose that $f$ is a relative phantom map to $\varphi$. We want to show that $s_n \circ f \colon X \to Y_n$ has a lift with respect to $\varphi_n$, up to homotopy, for any $n$. Since $f$ is a relative phantom map to $\varphi$, the map $f\vert_{X^{n+1}}\colon X^{n+1}\to Y$ has a lift $\tilde{f}\colon X^{n+1}\to B$ through $\varphi$, up to homotopy. Since the inclusion $X^{n+1}\to X$ induces an isomorphism $[X,B_n]\xrightarrow{\cong}[X^{n+1},B_n]$ of pointed sets, there is a map $\bar{f}\colon X\to B_n$ satisfying $\bar{f}\vert_{X^{n+1}}\simeq s_n\circ\tilde{f}$. Now we have
$$\varphi_n\circ\bar{f}\vert_{X^{n+1}}\simeq\varphi_n\circ s_n\circ\tilde{f}\simeq s_n\circ\varphi\circ\tilde{f}\simeq s_n\circ f\vert_{X^{n+1}}.$$
Since the inclusion $X^{n+1}\to X$ induces an isomorphism $[X,Y_n]\xrightarrow{\cong}[X^{n+1},Y_n]$ as pointed sets, we obtain that $\varphi_n\circ\bar{f}\simeq s_n\circ f$. Thus $\bar{f}$ is a desired lift.

Suppose next that for any $n$, $s_{n+1} \circ f \colon X \to Y_{n+1}$ has a lift $g\colon X\to B_{n+1}$ with respect to $\varphi_{n+1}$, up to homotopy. We want to show that $f|_{X^n} \colon X^n \to Y$ has a lift with respect to $\varphi$, up to homotopy. Since there is an isomorphism $(s_{n+1})_*\colon[X^n,B]\xrightarrow{\cong}[X^n,B_{n+1}]$ of pointed sets, we have a map $\bar{g}\colon X^n\to B$ satisfying $s_{n+1}\circ\bar{g}\simeq g\vert_{X^n}$. Then we get
$$s_{n+1}\circ\varphi\circ \bar{g}\simeq\varphi_{n+1}\circ s_{n+1}\circ\bar{g}\simeq\varphi_{n+1}\circ g\vert_{X^n}\simeq s_{n+1}\circ f\vert_{X^n}.$$
Since the map $(s_{n+1})_*\colon[X^n,Y]\to[X^n,Y_{n+1}]$ is also isomorphic, we get $\varphi\circ\bar{g}\simeq f\vert_{X^n}$ as required.
\end{proof}

Next we give a description of $\Ph(X,\varphi)$ by using $\Ph(X,Y)$ which will be useful to deal with $\Ph(X,\varphi)$ algebraically.

\begin{proposition}
\label{Ker_I}
There is an exact sequence of pointed sets
$$1 \to \Ph(X, Y) \to \Ph(X,\varphi) \xrightarrow{\pi_Y} \lim \, \varphi_* [X^n,B] \to 1$$
which is an exact sequence of groups whenever $X$ is a suspension.
\end{proposition}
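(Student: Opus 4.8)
The plan is to realize both $\Ph(X,Y)$ and $\Ph(X,\varphi)$ as pointed subsets of $[X,Y]$ via the Milnor exact sequence \eqref{milnor} and then simply restrict that sequence. First I would check that the skeletal restriction maps $[X^{n+1},Y]\to[X^n,Y]$ carry $\varphi_*[X^{n+1},B]$ into $\varphi_*[X^n,B]$: this is immediate from the naturality of restriction along the inclusions $X^n\hookrightarrow X^{n+1}$ together with $\varphi_*$. Hence $\{\varphi_*[X^n,B]\}_n$ is a sub-inverse-system of $\{[X^n,Y]\}_n$, its basepoint being the system of null homotopic classes, and $\lim\varphi_*[X^n,B]$ is a well-defined pointed subset of $\lim[X^n,Y]$.

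The central identification is that, as subsets of $[X,Y]$,
$$\Ph(X,\varphi)=\pi_Y^{-1}\big(\lim\varphi_*[X^n,B]\big).$$
Indeed, $\pi_Y$ sends a class $f$ to the compatible system $(f|_{X^n})_n$ of its skeletal restrictions, and by Definition \ref{definition relative phantom map} the map $f$ is a relative phantom map precisely when each $f|_{X^n}$ lifts through $\varphi$, that is, $f|_{X^n}\in\varphi_*[X^n,B]$, i.e. $(f|_{X^n})_n\in\lim\varphi_*[X^n,B]$. In particular $\pi_Y$ restricts to a based map $\Ph(X,\varphi)\xrightarrow{\pi_Y}\lim\varphi_*[X^n,B]$.

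With this in hand the exactness falls out of \eqref{milnor}. For surjectivity: given $\alpha\in\lim\varphi_*[X^n,B]\subseteq\lim[X^n,Y]$, the surjectivity of $\pi_Y$ in \eqref{milnor} yields $f\in[X,Y]$ with $\pi_Y(f)=\alpha$, and then $f\in\pi_Y^{-1}(\lim\varphi_*[X^n,B])=\Ph(X,\varphi)$. For exactness at $\Ph(X,\varphi)$: the kernel of the restricted $\pi_Y$ is $\Ph(X,\varphi)\cap\pi_Y^{-1}(*)$; by definition $\pi_Y^{-1}(*)=\{f\,|\,f|_{X^n}\simeq *\text{ for all }n\}=\Ph(X,Y)$, and every absolute phantom map is a relative one (a null homotopic restriction lifts through $\varphi$ via the basepoint), so $\Ph(X,Y)\subseteq\Ph(X,\varphi)$ and this kernel equals $\Ph(X,Y)$. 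Finally $\Ph(X,Y)\to\Ph(X,\varphi)$ is the inclusion of subsets of $[X,Y]$, hence injective, giving exactness at $\Ph(X,Y)$.

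For the group statement when $X=\Sigma X'$ is a suspension, I would choose the CW structure on $X$ induced by one on $X'$, so that $X^n=\Sigma(X'^{\,n-1})$ is itself a suspension and each skeletal inclusion is the suspension of a map. Then every $[X^n,Y]$ is a group, the restriction maps are homomorphisms, $\varphi_*[X^n,B]$ is a subgroup, and $\lim\varphi_*[X^n,B]$ is a group; since \eqref{milnor} is a sequence of groups in this case, the restricted sequence is one of groups and homomorphisms. The main obstacle I anticipate is not any hard estimate but the bookkeeping: verifying that $\{\varphi_*[X^n,B]\}$ is genuinely a sub-inverse-system with the intended basepoint and nailing down the identification $\Ph(X,\varphi)=\pi_Y^{-1}(\lim\varphi_*[X^n,B])$; once these are in place, exactness is a formal consequence of the Milnor sequence.
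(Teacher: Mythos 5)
Your proposal is correct and follows essentially the same route as the paper: both identify $\Ph(X,\varphi)$ as the preimage $\pi_Y^{-1}\bigl(\lim\varphi_*[X^n,B]\bigr)$ inside $[X,Y]$ (the paper phrases this as a pullback square) and then read off surjectivity and the kernel from the Milnor exact sequence \eqref{milnor}, the kernel being $\limone[\Sigma X^n,Y]\cong\Ph(X,Y)$. Your extra bookkeeping about the sub-inverse-system and the group case is a fine elaboration of what the paper leaves implicit.
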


\begin{proof}
Note that an element $f$ of $[X,Y]$ is a relative phantom map to $\varphi$ if and only if $\pi_Y(f) \in \lim [X^n, Y]$ is contained in $\lim\, \varphi_* [X^n, B]$. This means that the diagram
$$\xymatrix{
\Ph(X, \varphi) \ar[d] \ar[r]^-{\pi_Y} & \lim \, \varphi_* [X^n,B] \ar[d]\\
[X,Y] \ar[r]^-{\pi_Y} \ar[r] & \lim [X^n,Y]
}$$
is a pullback. By the Milnor exact sequence \eqref{milnor}, the lower $\pi_Y$ is surjective, implying that the upper $\pi_Y$ is surjective too. By \eqref{milnor}, we also have that the kernel of the lower $\pi_Y$ is $\limone[\Sigma X^n,Y]$. Thus the kernel of the upper $\pi_Y$ is isomorphic to $\limone[\Sigma X^n,Y]$ which is isomorphic with $\Ph(X,Y)$ by Proposition \ref{absolute_Ph}, completing the proof.
\end{proof}


\section{Triviality of relative phantom maps out of a suspension}

A relative phantom map $f\colon X\to Y$ to $\varphi\colon B\to Y$ is called trivial if the entire map $f$ has a lift with respect to $\varphi$, up to homotopy, and $\Ph(X, \varphi)$ is called trivial if every element of $\Ph(X, \varphi)$ is trivial. We consider the triviality of relative phantom maps to $\varphi\colon B\to Y$ when $\varphi$ is a fiber inclusion, that is, there is a homotopy fibration $B\xrightarrow{\varphi}Y\to W$. This case descends to relative phantom maps out of a suspension as follows. Given a map $\varphi \colon B\to Y$, there is a homotopy fibration $\Omega B \xrightarrow{\Omega\varphi}\Omega Y \to F$, where $F$ is the homotopy fiber of $\varphi$. Then $\Omega \varphi$ is a fiber inclusion and by the adjointness, we have
\begin{equation}
\label{Ph_adjoint}
\Ph(\Sigma X,\varphi)\cong\Ph(X,\Omega\phi).
\end{equation}

The following proposition enables us to detect the (non-)triviality of relative phantom maps by that of related absolute phantom maps.

\begin{proposition}
\label{triviality}
Let $B \xrightarrow{\varphi} Y \xrightarrow{p} Z$ be a homotopy fibration. Then a map $f \colon X \to Y$ is a relative phantom map to $\varphi$ if and only if the composite $p \circ f \colon X \to Z$ is an absolute phantom map. Moreover, $f$ is a trivial relative phantom map if and only if $p \circ f$ is null homotopic.
\end{proposition}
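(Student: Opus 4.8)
The plan is to reduce both assertions to the single defining property of a homotopy fibration, applied to different source spaces. The working tool I would first record is that for any CW-complex $A$ the sequence of pointed sets
$$[A,B]\xrightarrow{\varphi_*}[A,Y]\xrightarrow{p_*}[A,Z]$$
is exact, since $B\xrightarrow{\varphi}Y\xrightarrow{p}Z$ being a homotopy fibration identifies $B$ with the homotopy fiber of $p$, so that $\mathrm{Im}\,\varphi_*=\mathrm{Ker}\,p_*$ by the Puppe exact sequence. Translated into the language we need, this says that a map $g\colon A\to Y$ lifts to $B$ through $\varphi$ up to homotopy if and only if $p\circ g$ is null homotopic.

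For the first equivalence I would apply this tool with $A=X^n$ ranging over the finite skeleta of $X$. By Definition \ref{definition relative phantom map}, $f$ is a phantom map relative to $\varphi$ exactly when every restriction $f\vert_{X^n}$ lifts to $B$ through $\varphi$ up to homotopy; by the tool this is equivalent to $(p\circ f)\vert_{X^n}=p\circ(f\vert_{X^n})$ being null homotopic for every $n$, which is precisely the statement that $p\circ f$ is an absolute phantom map. The only routine point here is that restriction to a skeleton commutes with postcomposition by $p$.

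For the second equivalence I would apply the same tool, this time to the whole complex $A=X$, which is legitimate because $X$ is a connected CW-complex of finite type. By the definition of triviality, $f$ is a trivial relative phantom map exactly when the entire map $f$ lifts to $B$ through $\varphi$ up to homotopy, and the tool converts this directly into the condition that $p\circ f$ be null homotopic.

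Since the argument is in each case an immediate application of the exactness of the fibration sequence, I do not expect a genuine obstacle; the only point meriting attention is that the exact sequence of pointed sets must be invoked for the possibly infinite complex $X$ as well as for the finite skeleta, and this is available because every space in sight is a CW-complex.
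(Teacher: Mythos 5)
Your proposal is correct and follows essentially the same route as the paper's proof: both reduce each assertion to the single fact that a map $g\colon A\to Y$ lifts through $\varphi$ up to homotopy if and only if $p\circ g$ is null homotopic, applied to the skeleta $X^n$ for the first equivalence and to $X$ itself for the second. The paper simply invokes this lifting criterion without spelling out the Puppe exact sequence of pointed sets, so your write-up is just a slightly more detailed version of the same argument.
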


\begin{proof}
For every $n$, $f|_{X^n}$ has a lift with respect to $\varphi$, up to homotopy, if and only if $p \circ f|_{X^n}$ is null homotopic. This implies that $f$ is a relative phantom map to $\varphi$ if and only if $p \circ f$ is an absolute phantom map. Similarly, $p \circ f$ is null homotopic if and only if $f$ has a lift with respect to $\varphi$, up to homotopy. Thus the proof is done.
\end{proof}

We show two applications of Proposition \ref{triviality}. The first one is as follows. We denote the adjoint of a map $f\colon\Sigma X\to Y$ by $\mathrm{ad}(f)\colon X\to\Omega Y$. 

\begin{corollary}
\label{suspension triviality 1}
Let $\Omega Y \xrightarrow{\delta} F \to B \xrightarrow{\varphi} Y$ be a homotopy fibration sequence. A map $f\colon\Sigma X\to Y$ is a relative phantom map to $\varphi$ if and only if $\delta \circ \mathrm{ad}(f)$ is an absolute phantom map. Moreover, $f$ is trivial if and only if $\delta \circ \mathrm{ad}(f)$ is null homotopic.
\end{corollary}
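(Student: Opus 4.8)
The plan is to reduce the statement to Proposition \ref{triviality} by means of the loop--suspension adjunction. The given fibration sequence
$$\Omega Y \xrightarrow{\delta} F \to B \xrightarrow{\varphi} Y$$
is the Puppe extension of $\varphi$ one step to the left, so extending it one further step produces a homotopy fibration $\Omega B \xrightarrow{\Omega\varphi} \Omega Y \xrightarrow{\delta} F$ in which $\Omega\varphi$ is a fiber inclusion. This is exactly the input required by Proposition \ref{triviality}, with $(\Omega B,\Omega Y,F,\Omega\varphi,\delta)$ playing the roles of $(B,Y,Z,\varphi,p)$.

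First I would apply Proposition \ref{triviality} to this fibration to conclude that a map $g\colon X\to\Omega Y$ is a phantom map relative to $\Omega\varphi$ if and only if $\delta\circ g$ is an absolute phantom map, and that $g$ is trivial if and only if $\delta\circ g$ is null homotopic. Taking $g=\mathrm{ad}(f)$ then yields precisely the two assertions of the corollary phrased in terms of $\delta\circ\mathrm{ad}(f)$, once I know that the adjunction transports the relevant properties of $f$ to those of $\mathrm{ad}(f)$.

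Supplying that bridge is the remaining step. I would invoke the adjunction isomorphism \eqref{Ph_adjoint}, which is induced by $f\mapsto\mathrm{ad}(f)$, to identify ``$f$ is a phantom map relative to $\varphi$'' with ``$\mathrm{ad}(f)$ is a phantom map relative to $\Omega\varphi$''. For the triviality clause I would argue directly: by the adjunction, the entire map $f$ lifts through $\varphi$ if and only if $\mathrm{ad}(f)$ lifts through $\Omega\varphi$, so $f$ is trivial if and only if $\mathrm{ad}(f)$ is trivial. Chaining these equivalences with the conclusion of the previous paragraph gives the corollary.

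The point needing the most care is the adjunction bookkeeping behind \eqref{Ph_adjoint}: choosing a CW-structure so that $(\Sigma X)^{n+1}=\Sigma(X^n)$, and checking that a lift of $f|_{\Sigma(X^n)}$ through $\varphi$ corresponds under $\mathrm{ad}$ to a lift of $\mathrm{ad}(f)|_{X^n}$ through $\Omega\varphi$, naturally in $n$, and likewise for the global lift used in the triviality statement. Once this naturality is verified, no genuine obstacle remains and the corollary follows by combining the adjunction with Proposition \ref{triviality}.
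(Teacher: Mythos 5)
Your proposal is correct and follows essentially the same route as the paper: the paper's proof also first notes that $f$ is a (trivial) phantom map relative to $\varphi$ if and only if $\mathrm{ad}(f)$ is a (trivial) phantom map relative to $\Omega\varphi$, and then applies Proposition \ref{triviality} to the fibration $\Omega B\xrightarrow{\Omega\varphi}\Omega Y\xrightarrow{\delta}F$. The adjunction bookkeeping you flag is exactly the content the paper leaves implicit.
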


\begin{proof}
Note that $f\colon \Sigma X \rightarrow Y$ is a (trivial) relative phantom map to $\varphi$ if and only if $\mathrm{ad}(f) \colon X \rightarrow \Omega Y$ is a (trivial) relative phantom map to $\Omega \varphi$. Then by applying Proposition \ref{triviality} to the fibration sequence $\Omega B\xrightarrow{\Omega\varphi}\Omega Y\xrightarrow{\delta}F$, the proof is done.
\end{proof}

\begin{corollary}
\label{suspension triviality 2}
Suppose that we have a homotopy fibration $F\to B\xrightarrow{\varphi}Y$ such that the connecting map $\delta\colon\Omega Y\to F$ is null homotopic. Then $\Ph(\Sigma X,\varphi)$ is trivial.
\end{corollary}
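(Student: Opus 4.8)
The plan is to read the result off directly from Corollary \ref{suspension triviality 1}, so the proof will be essentially immediate once the fibration sequences are matched up. First I would observe that the given homotopy fibration $F\to B\xrightarrow{\varphi}Y$ extends to the left in the standard way to a homotopy fibration sequence
$$\Omega Y \xrightarrow{\delta} F \to B \xrightarrow{\varphi} Y,$$
whose connecting map is precisely the map $\delta$ that is assumed to be null homotopic. This is exactly the shape of fibration sequence required to invoke Corollary \ref{suspension triviality 1}.

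With this setup in hand, let $f\colon\Sigma X\to Y$ be any phantom map relative to $\varphi$. By the ``moreover'' part of Corollary \ref{suspension triviality 1}, such an $f$ is trivial if and only if the composite $\delta\circ\mathrm{ad}(f)\colon X\to F$ is null homotopic. Since $\delta$ itself is null homotopic by hypothesis, the composite $\delta\circ\mathrm{ad}(f)$ is null homotopic regardless of the choice of $f$. Hence every relative phantom map $f\colon\Sigma X\to Y$ is trivial, which is precisely the assertion that $\Ph(\Sigma X,\varphi)$ is trivial.

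There is essentially no genuine obstacle here beyond bookkeeping. The only point that warrants a moment's care is confirming that the connecting map named in the hypothesis coincides with the one appearing in the fibration sequence used by Corollary \ref{suspension triviality 1}, so that the hypothesis ``$\delta$ is null homotopic'' indeed forces ``$\delta\circ\mathrm{ad}(f)$ is null homotopic'' for every $f$. Once the two sequences are identified, the conclusion follows without further computation.
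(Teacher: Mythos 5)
Your proof is correct and follows exactly the paper's own argument: precompose the null homotopic connecting map $\delta$ with $\mathrm{ad}(f)$ and apply the ``moreover'' part of Corollary \ref{suspension triviality 1}. No issues.
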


\begin{proof}
Since $\delta$ is null homotopic, so is $\delta\circ\mathrm{ad}(f)$ for any $f\in\Ph(\Sigma X,\varphi)$. Then by Corollary \ref{suspension triviality 1}, $f$ is trivial, completing the proof.
\end{proof}

\begin{example}
\label{Ganea_fibration}
Let $F_n(Y)\to G_n(Y)\xrightarrow{p_n}Y$ be the $n$-th Ganea fibration. Since $\Omega G_n(Y) \to \Omega Y$ has a section (see Chapter 1 of \cite{CLOT}), the connecting map $\delta \colon \Omega Y \to F_n(Y)$ is null homotopic. Thus Corollary \ref{suspension triviality 2} implies that $\Ph(\Sigma X, p_n)$ is trivial.



Although we have seen that $\Ph(\Sigma X,p_n)$ is trivial, we will see in Proposition \ref{dBE_non-trivial} below that there is a non-suspension space $X(n)$ such that $\Ph(X(n),p_n)$ is not trivial for $Y=\R P^\infty$ with $n>2$.
\end{example}

The next lemma is a variant of Corollary \ref{suspension triviality 1} and will be used to prove Proposition \ref{rational_equiv} below which is a generalization of Corollary \ref{Ph=0} to the relative case.

\begin{lemma}
\label{suspension triviality 3}
Let $F$ be the homotopy fiber of a map $\varphi\colon B\to Y$. Then $\Ph(\Sigma X,\varphi)$ is trivial whenever $\Ph(X, F) = *$.
\end{lemma}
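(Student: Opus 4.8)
The plan is to reduce the statement directly to Corollary \ref{suspension triviality 1} by exploiting the canonical extension of the fibration $F\to B\xrightarrow{\varphi}Y$ one step to the left. Since $F$ is the homotopy fiber of $\varphi$, we have a homotopy fibration sequence
$$\Omega Y\xrightarrow{\delta}F\to B\xrightarrow{\varphi}Y,$$
so the hypotheses of Corollary \ref{suspension triviality 1} are in force with this particular connecting map $\delta$.

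Next I would take an arbitrary relative phantom map $f\in\Ph(\Sigma X,\varphi)$ and pass to its adjoint $\mathrm{ad}(f)\colon X\to\Omega Y$. By the first assertion of Corollary \ref{suspension triviality 1}, the composite $\delta\circ\mathrm{ad}(f)\colon X\to F$ is an absolute phantom map, i.e.\ it represents an element of $\Ph(X,F)$. The hypothesis $\Ph(X,F)=*$ then forces $\delta\circ\mathrm{ad}(f)$ to be null homotopic. Feeding this back into the ``moreover'' clause of Corollary \ref{suspension triviality 1} yields that $f$ is trivial. As $f$ was arbitrary, $\Ph(\Sigma X,\varphi)$ is trivial.

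I do not expect any genuine obstacle here: the essential homotopy-theoretic content is already packaged in Corollary \ref{suspension triviality 1}, and this lemma is a formal consequence obtained by converting the vanishing of $\Ph(X,F)$ into the nullhomotopy of $\delta\circ\mathrm{ad}(f)$. The only point requiring a moment's care is the bookkeeping of the direction of the fibration sequence, so that the connecting map $\delta$ is exactly the one appearing in Corollary \ref{suspension triviality 1}; once the sequence $\Omega Y\xrightarrow{\delta}F\to B\xrightarrow{\varphi}Y$ has been correctly identified, the argument is immediate.
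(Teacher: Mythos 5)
Your proposal is correct and follows exactly the paper's own argument: apply the first part of Corollary \ref{suspension triviality 1} to see that $\delta\circ\mathrm{ad}(f)$ is an absolute phantom map, use $\Ph(X,F)=*$ to conclude it is null homotopic, and invoke the ``moreover'' clause to deduce that $f$ is trivial. No differences worth noting.
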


\begin{proof}
Let $\delta\colon\Omega Y\to F$ be the connecting map of a homotopy fibration $F\to B\xrightarrow{\varphi}Y$. Since $\Ph(X, F) = *$, $\delta\circ\mathrm{ad}(f)$ is a trivial absolute phantom map for any $f\in\Ph(\Sigma X,\varphi)$. Then $f$ is trivial by Corollary \ref{suspension triviality 1}, completing the proof.
\end{proof}

As in Section 1, we will write $\F$ to denote the class of connected CW complexes each of which has finitely generated $\pi_n$ for $n\ge 2$.

\begin{proposition}
\label{rational_equiv}
Let $B,Y\in\F$. Suppose that $\varphi \colon B \to Y$ is an isomorphism in $\pi_n\otimes\Q$ for $n\ge 2$. Then $\Ph(\Sigma X,\varphi)$ is trivial.
\end{proposition}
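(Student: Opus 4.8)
The plan is to apply Lemma \ref{suspension triviality 3}, which reduces the triviality of $\Ph(\Sigma X,\varphi)$ to the single assertion $\Ph(X,F)=*$, where $F$ is the homotopy fiber of $\varphi$. I would then aim to bring this into the range of Corollary \ref{Ph=0}(2), so the real task becomes verifying that the relevant target space belongs to $\F$ and is rationally trivial above degree $1$.

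First I would analyze $\pi_*(F)$ through the long exact sequence of the homotopy fibration $F\to B\xrightarrow{\varphi}Y$. Tensoring with $\Q$, which preserves exactness, I consider for $n\ge 2$ the segment
$$\pi_{n+1}(B)\otimes\Q\xrightarrow{\varphi_*}\pi_{n+1}(Y)\otimes\Q\xrightarrow{\partial}\pi_n(F)\otimes\Q\xrightarrow{j}\pi_n(B)\otimes\Q\xrightarrow{\varphi_*}\pi_n(Y)\otimes\Q.$$
Since $n\ge 2$ entails $n+1\ge 2$ as well, both outer maps $\varphi_*$ are isomorphisms by hypothesis. Surjectivity of the left $\varphi_*$ gives $\partial=0$, hence $j$ is injective; injectivity of the right $\varphi_*$ forces $j=0$; together these yield $\pi_n(F)\otimes\Q=0$ for every $n\ge 2$. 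The same sequence displays $\pi_n(F)$, for $n\ge 2$, as an extension of a subgroup of the finitely generated group $\pi_n(B)$ by a quotient of the finitely generated group $\pi_{n+1}(Y)$, so $\pi_n(F)$ is itself finitely generated (and abelian, as $n\ge 2$).

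A point I would be careful about is that $\varphi$ is assumed to be a rational isomorphism on $\pi_n$ only for $n\ge 2$, so nothing is known about $\pi_0(F)$ or $\pi_1(F)$; in particular $F$ need not be connected. This is harmless: as $X$ is connected, every based map out of $X$ and every based null-homotopy of its skeleta stays in the basepoint component $F_0$, so $\Ph(X,F)=\Ph(X,F_0)$. Since $\pi_n(F_0)=\pi_n(F)$ for $n\ge 1$, the computation above shows $F_0\in\F$ with $\pi_*(F_0)\otimes\Q=0$ for $*\ge 2$. Corollary \ref{Ph=0}(2) then gives $\Ph(X,F_0)=*$, whence $\Ph(X,F)=*$, and Lemma \ref{suspension triviality 3} completes the argument.

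I expect the main obstacle to be exactly this class-$\F$ bookkeeping rather than any genuine difficulty: one must simultaneously confirm finite generation and rational vanishing of $\pi_n(F)$ in all degrees $n\ge 2$, and one must accommodate the possible disconnectedness of $F$ with no hypothesis available on $\pi_1$. Once one observes that membership in $\F$ imposes no condition on $\pi_1$ and that replacing $F$ by $F_0$ changes nothing, the remaining rational diagram chase is routine.
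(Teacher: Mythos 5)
Your proposal is correct and follows exactly the paper's own route: the long exact sequence of the fibration shows the homotopy fiber $F$ satisfies the hypotheses of Corollary \ref{Ph=0}(2), giving $\Ph(X,F)=*$, and Lemma \ref{suspension triviality 3} concludes. You merely spell out the diagram chase, the finite-generation check, and the passage to the basepoint component of $F$, all of which the paper leaves implicit.
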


\begin{proof}
By the assumption, the homotopy fiber $F$ of $\varphi$ satisfies the condition of Corollary \ref{Ph=0}, implying $\Ph(X,F)=*$. Then we get the desired result by Lemma \ref{suspension triviality 3}.
\end{proof}

Next we show the second application of Proposition \ref{triviality}.

\begin{proposition}
\label{non-trivial}
Suppose that there is a homotopy fibration sequence $B \xrightarrow{\varphi} Y \xrightarrow{\alpha} V \xrightarrow{\beta} W$ such that either $\beta$ is null homotopic or $\Ph(X,W)=*$. Then $\Ph(X, \varphi)$ is trivial if and only if $\Ph(X, V)=*$. 
\end{proposition}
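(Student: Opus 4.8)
The plan is to run everything through Proposition \ref{triviality} applied to the homotopy fibration $B\xrightarrow{\varphi}Y\xrightarrow{\alpha}V$. That proposition translates the relative phantom condition and its triviality into absolute statements about the composite $\alpha\circ f$: a map $f\colon X\to Y$ is a phantom map relative to $\varphi$ if and only if $\alpha\circ f$ is an absolute phantom map into $V$, and $f$ is trivial if and only if $\alpha\circ f$ is null homotopic. With this dictionary in hand, the claim becomes a comparison between the absolute phantom maps into $V$ that lie in the image of $\alpha_*$ and all absolute phantom maps into $V$.

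The easy direction is that $\Ph(X,V)=*$ implies $\Ph(X,\varphi)$ is trivial. For this I would take any $f\in\Ph(X,\varphi)$; by Proposition \ref{triviality} the composite $\alpha\circ f$ is an absolute phantom map, so by hypothesis it is null homotopic, and again by Proposition \ref{triviality} this says $f$ is trivial. The converse, that triviality of $\Ph(X,\varphi)$ forces $\Ph(X,V)=*$, is where the dichotomy hypothesis on $\beta$ enters. Given an arbitrary absolute phantom map $g\colon X\to V$, I want to exhibit it as $\alpha\circ f$ for some relative phantom map $f$, so that triviality of $\Ph(X,\varphi)$ collapses $g$ to a point. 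The route is to lift $g$ through $\alpha$ to a map $f\colon X\to Y$; using the fibration $Y\xrightarrow{\alpha}V\xrightarrow{\beta}W$, such a lift exists up to homotopy exactly when $\beta\circ g\colon X\to W$ is null homotopic.

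The main obstacle, and the only place the hypotheses are used, is precisely this lifting step. I would first observe that $\beta\circ g$ is itself an absolute phantom map: since $g|_{X^n}$ is null homotopic for every $n$, so is $(\beta\circ g)|_{X^n}$. Now either hypothesis disposes of the obstruction: if $\beta$ is null homotopic then $\beta\circ g\simeq *$ directly, while if $\Ph(X,W)=*$ then the absolute phantom map $\beta\circ g$ is forced to be null homotopic. In both cases $g$ lifts to a map $f\colon X\to Y$ with $\alpha\circ f\simeq g$. Since $\alpha\circ f$ is an absolute phantom map, Proposition \ref{triviality} shows $f\in\Ph(X,\varphi)$, and triviality of $\Ph(X,\varphi)$ then gives $\alpha\circ f\simeq *$, i.e. $g\simeq *$. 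As $g$ was arbitrary, $\Ph(X,V)=*$.

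I expect no hidden difficulties beyond the lifting: without the control on $\beta\circ g$, an absolute phantom map into $V$ has no reason to factor through $Y$, and the equivalence would genuinely fail, so the dichotomy hypothesis is doing the essential work. Everything else is a formal application of the correspondence supplied by Proposition \ref{triviality} together with the elementary fact that post-composing a phantom map with any map yields a phantom map.
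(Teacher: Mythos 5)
Your proof is correct and follows essentially the same route as the paper's: both directions are run through Proposition \ref{triviality}, and the dichotomy hypothesis is used exactly as in the paper to kill the obstruction $\beta\circ g$ so that an arbitrary absolute phantom map into $V$ lifts through $\alpha$. No substantive differences.
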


\begin{proof}
It clearly follows from Proposition \ref{triviality} that $\Ph(X,V) = *$ implies that $\Ph(X, \varphi)$ is trivial. On the other hand, let $f \colon X \to V$ be an absolute phantom map. Then $\beta\circ f\colon X\to W$ is an absolute phantom map, so by the assumption, $\beta \circ f$ is null homotopic. Thus $f$ has a lift $\tilde{f}$ with respect to $\alpha$, up to homotopy. By Proposition \ref{triviality}, $\tilde{f}$ is a relative phantom map from $X$ to $\varphi$ which is trivial if and only if $f\colon X\to V$ is null homotopic. Therefore the proof is completed.
\end{proof}

\begin{corollary}
\label{non-trivial 2}
Let $F \xrightarrow{j} B\xrightarrow{\varphi}Y$ be a homotopy fibration such that either $j$ is null homotopic or $\Ph(X,B)=*$. Then $\Ph(\Sigma X, \varphi)$ is trivial if and only if $\Ph(X, F)$ is trivial.
\end{corollary}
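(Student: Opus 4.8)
The plan is to deduce this directly from Proposition~\ref{non-trivial} by passing through the looping adjointness. First I would recall the identification \eqref{Ph_adjoint}, namely $\Ph(\Sigma X,\varphi)\cong\Ph(X,\Omega\varphi)$, together with the observation made in the proof of Corollary~\ref{suspension triviality 1} that this adjoint correspondence carries trivial relative phantom maps to trivial relative phantom maps. Consequently, $\Ph(\Sigma X,\varphi)$ is trivial if and only if $\Ph(X,\Omega\varphi)$ is trivial, and it suffices to analyze the latter.

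Next I would extend the given homotopy fibration $F\xrightarrow{j}B\xrightarrow{\varphi}Y$ one step to the left, obtaining the homotopy fibration sequence
$$\Omega B\xrightarrow{\Omega\varphi}\Omega Y\xrightarrow{\delta}F\xrightarrow{j}B,$$
where $\delta$ is the connecting map. This is exactly a fibration sequence of the shape required by Proposition~\ref{non-trivial}: with $\Omega\varphi$ in the role of the map $\varphi$ there, the space $F$ plays the role of $V$, the map $j$ plays the role of $\beta$, and $B$ plays the role of $W$.

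I would then apply Proposition~\ref{non-trivial} to this sequence. The hypothesis of that proposition requires that either $\beta$ be null homotopic or $\Ph(X,W)=*$, which under the present identifications is precisely the standing assumption of the corollary, that either $j$ is null homotopic or $\Ph(X,B)=*$. Its conclusion then reads: $\Ph(X,\Omega\varphi)$ is trivial if and only if $\Ph(X,F)=*$. Since $\Ph(X,F)$ consists of absolute phantom maps, whose triviality is by definition nullhomotopy, the condition $\Ph(X,F)=*$ is the same as the triviality of $\Ph(X,F)$. Combining this with the adjoint identification of the first step gives the desired equivalence.

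There is no substantive obstacle here, as this is a corollary of Proposition~\ref{non-trivial}; the only point demanding care is the bookkeeping of the looped fibration sequence. One must orient it so that the connecting map $\delta\colon\Omega Y\to F$ appears as the map $\alpha$ of Proposition~\ref{non-trivial} and so that $j$ (rather than $\varphi$) is the map $\beta$ governing the alternative hypotheses; otherwise the two cases of the hypothesis would fail to line up with those of the corollary.
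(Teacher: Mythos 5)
Your proposal is correct and is essentially identical to the paper's proof, which likewise applies Proposition \ref{non-trivial} to the looped fibration sequence $\Omega B\xrightarrow{\Omega\varphi}\Omega Y\to F\xrightarrow{j}B$ together with the adjoint identification \eqref{Ph_adjoint}. Your extra remarks on matching the roles of $\alpha$ and $\beta$ and on the equivalence of ``$\Ph(X,F)$ trivial'' with ``$\Ph(X,F)=*$'' are accurate bookkeeping that the paper leaves implicit.
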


\begin{proof}
Apply Proposition \ref{non-trivial} to the homotopy fibration sequence $\Omega B \xrightarrow{\Omega \varphi} \Omega Y \xrightarrow{s} F \xrightarrow{j} B$ together with the adjoint congruence \eqref{Ph_adjoint}.
\end{proof}

\begin{example}
\label{example_non-trivial}
We give an example of a space $X$ and a map $\varphi$ such that $\Ph(\Sigma X,\varphi)$ is non-trivial although $\Ph(\Sigma X, Y)$ is trivial. Let $u\colon BS^3\to K(\Z,4)$ be a generator of $H^4(BS^3;\Z)\cong\Z$, and extend it to a homotopy fibration sequence 
$$S^3\xrightarrow{\Omega u}K(\Z,3)=B\xrightarrow{\varphi}Y\to BS^3\xrightarrow{u}K(\Z,4).$$
By Corollary \ref{Ph=0}, we have $\Ph(X,B)=*$ for any space $X$. So we can apply Corollary \ref{non-trivial 2} to the homotopy fibration sequence $S^3\xrightarrow{\Omega u}K(\Z,3)=B\xrightarrow{\varphi}Y$. By \cite{G}, we have $\Ph(\C P^\infty,S^3)\ne *$, and thus we obtain that $\Ph(\Sigma\C P^\infty,\varphi)$ is not trivial. On the other hand, it follows from Corollary \ref{Ph=0} that $\Ph(\Sigma \C P^\infty, Y)$ is trivial.
\end{example}


\section{Relative triviality of relative phantom maps out of a suspension}

Any absolute phantom map is a relative phantom map and it is not an object that we would like to study in this paper. So as in Section 1, a relative phantom map out of a suspension is called relatively trivial if it is a finite sum of trivial relative phantom maps and absolute phantom maps, and we will investigate conditions for the existence of a relatively non-trivial phantom map. By Example \ref{relatively non-trivial}, there is certainly a relatively non-trivial phantom map. We say that $\Ph(\Sigma X,\varphi)$ is relatively trivial if it consists only of relatively trivial relative phantom maps. We first observe basic properties of relatively trivial relative phantom maps. Note that the set $\Ph(\Sigma X,\varphi)$ is a group.

\begin{proposition}
\begin{enumerate}
\item Any relatively trivial relative phantom map is homotopic to the sum $f+g$, where $f$ is a trivial phantom map and $g$ is an absolute phantom map.
\item The set of relatively trivial relative phantom maps from $\Sigma X$ to $\varphi\colon B\to Y$ is a subgroup of $\Ph(\Sigma X,\varphi)$.
\end{enumerate}
\end{proposition}

\begin{proof}
The map $\pi_Y\colon\Ph(\Sigma X,\varphi)\to\lim\, \varphi_* [\Sigma X^n, B]$ in Proposition \ref{Ker_I} is a group homomorphism whose kernel is $\Ph(\Sigma X,Y)$. In particular, $\Ph(\Sigma X,Y)$ is a normal subgroup of $\Ph(\Sigma X,\varphi)$, implying (1). We also get that the set of relatively trivial relative phantom maps from $X$ to $\varphi$ is the subgroup $\varphi_*[\Sigma X,B]+\Ph(\Sigma X,Y)$ of $\Ph(\Sigma X,\varphi)$. Thus the proof is done.
\end{proof}

We investigate conditions which guarantee that $\Ph(\Sigma X,\varphi)$ is relatively trivial. 

\begin{lemma}
$\Ph(\Sigma X, \varphi)$ is relatively trivial if and only if the composite 
$$[\Sigma X, B] \xrightarrow{\varphi_*} \Ph(\Sigma X,\varphi) \xrightarrow{\pi_Y} \lim\, \varphi_* [\Sigma X^n, B]$$
is surjective, where the map $\pi_Y$ is as Proposition \ref{Ker_I}.
\end{lemma}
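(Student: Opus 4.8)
The plan is to read everything off the group-theoretic short exact sequence of Proposition \ref{Ker_I}. Since $\Sigma X$ is a suspension, that proposition gives an exact sequence of groups
$$1 \to \Ph(\Sigma X, Y) \to \Ph(\Sigma X, \varphi) \xrightarrow{\pi_Y} \lim\, \varphi_*[\Sigma X^n, B] \to 1,$$
so $\pi_Y$ is a surjective homomorphism whose kernel is exactly $\Ph(\Sigma X, Y)$. By the preceding proposition (and its proof), the relatively trivial relative phantom maps form the subgroup $H = \varphi_*[\Sigma X, B] + \Ph(\Sigma X, Y)$ of $\Ph(\Sigma X,\varphi)$, and by definition $\Ph(\Sigma X, \varphi)$ is relatively trivial precisely when $H = \Ph(\Sigma X, \varphi)$.

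First I would record that $H$ contains $\ker \pi_Y = \Ph(\Sigma X, Y)$. Then the standard correspondence between subgroups containing the kernel and subgroups of the image shows that $H = \Ph(\Sigma X, \varphi)$ if and only if $\pi_Y(H) = \pi_Y(\Ph(\Sigma X, \varphi))$, and the right-hand side equals the whole of $\lim\, \varphi_*[\Sigma X^n, B]$ by surjectivity of $\pi_Y$.

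Next I would compute $\pi_Y(H)$. Since $\pi_Y$ is a homomorphism annihilating $\Ph(\Sigma X, Y)$, we have $\pi_Y(H) = \pi_Y\bigl(\varphi_*[\Sigma X, B]\bigr)$, which is exactly the image of the composite $\pi_Y \circ \varphi_* \colon [\Sigma X, B] \to \lim\, \varphi_*[\Sigma X^n, B]$. Combining this with the previous step, $\Ph(\Sigma X, \varphi)$ is relatively trivial if and only if this composite is surjective, which is the desired statement.

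There is no genuine obstacle here: the lemma is a formal consequence of Proposition \ref{Ker_I} together with the identification of the relatively trivial subgroup. The only point deserving care is to note explicitly that $H \supseteq \ker \pi_Y$, so that the correspondence theorem applies and equality of the subgroups $H$ and $\Ph(\Sigma X,\varphi)$ can be detected after applying $\pi_Y$.
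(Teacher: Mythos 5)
Your proof is correct and uses exactly the same ingredients as the paper's: surjectivity of $\pi_Y$ and the identification $\mathrm{Ker}\,\pi_Y=\Ph(\Sigma X,Y)$ from Proposition \ref{Ker_I}, together with the description of the relatively trivial maps as the subgroup $\varphi_*[\Sigma X,B]+\Ph(\Sigma X,Y)$. The paper simply carries out the two implications by an element chase instead of invoking the correspondence theorem, so this is essentially the same argument.
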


\begin{proof}
Suppose first that $\Ph(\Sigma X,\varphi)$ is relatively trivial. There is a commutative diagram of groups
\begin{equation}
\label{rel_trivial_diagram}
\xymatrix{[\Sigma X,B]\ar[d]^{\varphi_*}\ar[r]^-{\pi_B}&\lim\,[\Sigma X^n,B]\ar[d]^{\varphi_*}\\
\Ph(\Sigma X,\varphi)\ar[r]^-{\pi_Y} & \lim\, \varphi_* [\Sigma X^n, B]}
\end{equation}
where $\pi_B$ and $\pi_Y$ denotes the natural projections as in \eqref{milnor} and Proposition \ref{Ker_I}. Then by Proposition \ref{Ker_I}, the bottom arrow $\pi_Y$ of \eqref{rel_trivial_diagram} is surjective, so for any $f\in\lim \, \varphi_* [\Sigma X^n, B]$, there is $\tilde{f} \in \Ph(\Sigma X, \varphi)$ satisfying $\pi_Y (\tilde{f}) = f$. By the assumption, $\tilde{f}$ is relatively trivial, so there are $g\in[\Sigma X,B]$ and $h\in\Ph(\Sigma X,Y)$ such that $\tilde{f}=\varphi_*(g)+h$. Now we have
$$f=\pi_Y(\tilde{f})=\pi_Y(\varphi_*(g)+h)=\pi_Y\circ\varphi_*(g)+\pi_Y(h)$$
where $\pi_Y$ is a group homomorphism. By definition, we have $\pi_Y(h)=0$, and then we have proved that $\pi_Y \circ \varphi_*$ is surjective.

Next suppose that $\pi_Y \circ \varphi_*$ is surjective, and take any $f \in \Ph (\Sigma X, \varphi)$. Then there is $g\in[\Sigma X,B]$ such that $\pi_Y\circ\varphi_*(g)=\pi_Y(f)$, implying $f-\varphi_*(g)\in\mathrm{Ker}\,\pi_Y$. Since $\mathrm{Ker}\,\pi_Y=\Ph(\Sigma X,Y)$ by Proposition \ref{Ker_I}, there is $h\in\Ph(\Sigma X,Y)$ satisfying $f-\varphi_*(g)=h$, or equivalently, $f=h +\varphi_*(g)$. Thus $\Ph(\Sigma X,\varphi)$ is relatively trivial. Therefore the proof is completed.
\end{proof}

Let $K_n$ be the kernel of the group homomorphism $\varphi_* \colon [\Sigma X^n, B] \to [\Sigma X^n, Y]$. Then we have the following exact sequence of inverse systems of groups:
$$1 \longrightarrow \{ K_n\} \longrightarrow \{ [\Sigma X^n, B]\} \longrightarrow \{ \varphi_* [\Sigma X^n, B]\} \longrightarrow 1$$

\begin{proposition}
\label{criterion_1}
$\Ph(\Sigma X,\varphi)$ is relatively trivial if and only if the kernel of the map 
$$\limone K_n\to\limone[\Sigma X^n,B]$$
is trivial.
\end{proposition}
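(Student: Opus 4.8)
The plan is to combine the preceding lemma with the six-term exact sequence of Lemma \ref{6-term}, reducing everything to a chain of formal equivalences. By the lemma immediately above, $\Ph(\Sigma X,\varphi)$ is relatively trivial if and only if the composite $\pi_Y\circ\varphi_*\colon[\Sigma X,B]\to\lim\,\varphi_*[\Sigma X^n,B]$ is surjective. My first step is to rephrase this condition purely at the level of inverse limits. Using the commutative diagram \eqref{rel_trivial_diagram}, this composite equals $\varphi_*\circ\pi_B$, where $\pi_B\colon[\Sigma X,B]\to\lim\,[\Sigma X^n,B]$ is the projection from the Milnor sequence \eqref{milnor} and hence is surjective. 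Therefore $\pi_Y\circ\varphi_*$ is surjective if and only if the induced map $\varphi_*\colon\lim\,[\Sigma X^n,B]\to\lim\,\varphi_*[\Sigma X^n,B]$ is surjective.

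Next I would apply Lemma \ref{6-term} to the short exact sequence of inverse systems of groups
$$1\to\{K_n\}\to\{[\Sigma X^n,B]\}\to\{\varphi_*[\Sigma X^n,B]\}\to 1$$
displayed just before the proposition, yielding the exact sequence
$$\cdots\to\lim\,[\Sigma X^n,B]\xrightarrow{\varphi_*}\lim\,\varphi_*[\Sigma X^n,B]\xrightarrow{\delta}\limone K_n\xrightarrow{\iota}\limone[\Sigma X^n,B]\to\cdots$$
where $\iota$ is precisely the map appearing in the statement and $\delta$ is the connecting map. Exactness at $\lim\,\varphi_*[\Sigma X^n,B]$ gives $\mathrm{Im}\,\varphi_*=\mathrm{Ker}\,\delta$, so $\varphi_*$ is surjective exactly when $\mathrm{Ker}\,\delta$ is all of $\lim\,\varphi_*[\Sigma X^n,B]$, that is, when $\mathrm{Im}\,\delta=\{*\}$. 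Finally, exactness at $\limone K_n$ gives $\mathrm{Im}\,\delta=\mathrm{Ker}\,\iota$, so $\mathrm{Im}\,\delta=\{*\}$ if and only if $\mathrm{Ker}\,\iota=\{*\}$, i.e.\ the kernel of $\limone K_n\to\limone[\Sigma X^n,B]$ is trivial. Chaining these equivalences completes the argument.

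The work here is entirely formal, so I do not expect a serious obstacle; the only point requiring care is the bookkeeping with pointed sets, since $\limone$ of a system of (a priori non-abelian) groups is only a pointed set and the exactness assertions must be read in the pointed-set sense of Lemma \ref{6-term}. Because $\Sigma X^n$ is a suspension, each $[\Sigma X^n,B]$ is a group and $K_n$ is a normal subgroup, so the full six-term sequence applies and the manipulations with surjectivity, images, and kernels above are all legitimate.
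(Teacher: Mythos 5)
Your proof is correct and follows essentially the same route as the paper: reduce relative triviality to surjectivity of $\varphi_*\colon\lim\,[\Sigma X^n,B]\to\lim\,\varphi_*[\Sigma X^n,B]$ via the diagram \eqref{rel_trivial_diagram} and the Milnor sequence, then apply Lemma \ref{6-term} to the displayed short exact sequence of inverse systems. Your explicit bookkeeping of images and kernels in the pointed-set sense is a slightly more detailed version of the paper's final step, but the argument is identical in substance.
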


\begin{proof}
Consider the commutative diagram \eqref{rel_trivial_diagram}. Since the top map $\pi_B$ is surjective by the Milnor exact sequence \eqref{milnor}, the map $\varphi_*\circ\pi_B\colon[\Sigma X,B]\to\lim\, \varphi_* [\Sigma X^n, B]$ is surjective if and only if $\varphi_*\colon\lim\,[\Sigma X^n,B]\to\lim\, \varphi_* [\Sigma X^n, B]$ is surjective. Applying Lemma \ref{6-term} to the short exact sequence
$$1\to\{K_n\}\to\{[\Sigma X^n,B]\}\xrightarrow{\varphi_*}\{\varphi_* [\Sigma X^n,B]\}\to 1$$
of inverse systems of groups, we get an exact sequence
$$\lim\,[\Sigma X^n,B]\xrightarrow{\varphi_*}\lim\, \varphi_*[\Sigma X^n, B] \to\limone K_n\to\limone[\Sigma X^n,B]$$
of pointed sets. Thus the map $\varphi_*\colon\lim\,[\Sigma X^n,B]\to\lim\, \varphi_* [\Sigma X^n, B]$ is surjective if and only if the kernel of the map $\limone K_n\to\limone[\Sigma X^n,B]$ is trivial. This completes the proof.
\end{proof}

The assumption of the following corollary trivially implies that of Proposition \ref{criterion_1}.

\begin{corollary}
\label{criterion_2}
$\Ph(\Sigma X,\varphi)$ is relatively trivial whenever $\limone K_n=*$.
\end{corollary}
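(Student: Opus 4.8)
The plan is to deduce this immediately from Proposition \ref{criterion_1}. That proposition characterizes the relative triviality of $\Ph(\Sigma X,\varphi)$ by the vanishing of the kernel of the natural map $\limone K_n\to\limone[\Sigma X^n,B]$, where this map is induced by the inclusion of inverse systems $\{K_n\}\hookrightarrow\{[\Sigma X^n,B]\}$, and where ``kernel'' means the preimage of the basepoint in the sense of pointed sets introduced in Section 2. So the only thing to check is that the hypothesis $\limone K_n=*$ forces this kernel to be trivial.

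The key observation is that the kernel of a basepoint-preserving map is, by definition, a subset of its domain. Hence if $\limone K_n=*$, meaning that the domain consists of nothing but its basepoint, then the preimage of the basepoint can contain only that single point, and the kernel is automatically trivial. No computation is required: the condition $\limone K_n=*$ is simply stronger than the condition that the kernel be trivial, exactly as recorded in the sentence preceding the statement.

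I would therefore conclude by invoking Proposition \ref{criterion_1}: since the kernel of $\limone K_n\to\limone[\Sigma X^n,B]$ is trivial, $\Ph(\Sigma X,\varphi)$ is relatively trivial. There is no real obstacle here, as all the substance is carried by Proposition \ref{criterion_1}; this corollary merely isolates the most transparent sufficient condition for that proposition's hypothesis, which is convenient because $\limone K_n=*$ is often easier to verify in practice (for instance via a Mittag-Leffler argument on the inverse system $\{K_n\}$) than the vanishing of the kernel of the induced map.
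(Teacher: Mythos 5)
Your argument is correct and is exactly the paper's: the corollary is stated immediately after the remark that its hypothesis ``trivially implies'' that of Proposition \ref{criterion_1}, since a basepoint-preserving map out of a one-point pointed set has trivial kernel. No further comment is needed.
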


We then consider practical conditions which guarantee $\limone K_n=*$. We first translate the condition $\limone K_n=*$ to that of absolute phantom maps.

\begin{lemma}
\label{criterion_3}
Let $F\xrightarrow{j}B\xrightarrow{\varphi}Y$ be a homotopy fibration with the connecting map $\delta\colon\Omega Y\to F$. For any space $X$, $\limone K_n=*$ if and only if the map $\delta_* \colon \Ph(X,\Omega Y)\to\Ph(X,F)$ is surjective.
\end{lemma}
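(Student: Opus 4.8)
The plan is to translate both sides of the claimed equivalence into statements about $\limone$ of inverse systems and then connect them through the long exact sequence of the fibration. First I would extend the fibration to the sequence $\Omega Y\xrightarrow{\delta}F\xrightarrow{j}B\xrightarrow{\varphi}Y$ and apply $[\Sigma X^n,-]$, producing an exact sequence of (not necessarily abelian) groups
$$[\Sigma X^n,\Omega Y]\xrightarrow{\delta_*}[\Sigma X^n,F]\xrightarrow{j_*}[\Sigma X^n,B]\xrightarrow{\varphi_*}[\Sigma X^n,Y],$$
which is natural in $n$ and hence an exact sequence of inverse systems. Here $K_n=\mathrm{Ker}\,\varphi_*=\mathrm{Im}\,j_*$, and I set $L_n=\mathrm{Ker}\,j_*=\mathrm{Im}\,\delta_*$, a normal subgroup of $[\Sigma X^n,F]$.

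The key step is to break this into two short exact sequences of inverse systems of groups. Using $K_n\cong[\Sigma X^n,F]/L_n$ I get
$$1\to\{L_n\}\to\{[\Sigma X^n,F]\}\xrightarrow{j_*}\{K_n\}\to 1,$$
and using $L_n\cong[\Sigma X^n,\Omega Y]/\mathrm{Ker}\,\delta_*$ I get
$$1\to\{\mathrm{Ker}\,\delta_*\}\to\{[\Sigma X^n,\Omega Y]\}\xrightarrow{\delta_*}\{L_n\}\to 1.$$
Applying Lemma \ref{6-term} to the latter shows that $\limone[\Sigma X^n,\Omega Y]\to\limone L_n$ is surjective, and applying it to the former yields the exact sequence of pointed sets
$$\limone L_n\xrightarrow{\iota}\limone[\Sigma X^n,F]\xrightarrow{\psi}\limone K_n\to *,$$
so that $\psi$ is surjective and $\mathrm{Im}\,\iota=\mathrm{Ker}\,\psi$.

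Finally I would assemble these. By naturality of the isomorphism of Proposition \ref{absolute_Ph}, the map $\delta_*\colon\Ph(X,\Omega Y)\to\Ph(X,F)$ is identified with $\limone\delta_*\colon\limone[\Sigma X^n,\Omega Y]\to\limone[\Sigma X^n,F]$, which factors as the surjection $\limone[\Sigma X^n,\Omega Y]\to\limone L_n$ followed by $\iota$; hence its image is exactly $\mathrm{Im}\,\iota=\mathrm{Ker}\,\psi$. Therefore $\delta_*$ is surjective if and only if $\mathrm{Ker}\,\psi$ is all of $\limone[\Sigma X^n,F]$, i.e.\ $\psi$ is the trivial map, and since $\psi$ is surjective this happens precisely when $\limone K_n=*$. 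The main obstacle I anticipate is the bookkeeping in the non-abelian setting: Lemma \ref{6-term} only produces exact sequences of pointed sets, so I must argue throughout with images and kernels of based maps rather than with genuine quotient groups, and I must verify carefully that $\delta_*$ on relative phantom sets really corresponds to $\limone\delta_*$ under the natural Milnor-sequence identification, which is the one point where naturality is genuinely used.
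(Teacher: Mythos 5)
Your proof is correct and takes essentially the same route as the paper's: the same decomposition into the two short exact sequences of inverse systems built from $L_n=\mathrm{Ker}\,j_*$ and $\mathrm{Ker}\,\delta_*$, the same two applications of Lemma \ref{6-term}, and the same identification of the composite $\limone[\Sigma X^n,\Omega Y]\to\limone L_n\to\limone[\Sigma X^n,F]$ with $\delta_*\colon\Ph(X,\Omega Y)\to\Ph(X,F)$ via Proposition \ref{absolute_Ph}. The care you take with images and kernels of pointed maps in the non-abelian setting is exactly the bookkeeping the paper's argument relies on.
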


\begin{proof}
Put $L_n=\mathrm{Ker}\{j_* \colon[\Sigma X^n, F]\to[\Sigma X^n,B]\}$. By the exactness of the sequence
$$[\Sigma X^n, F] \xrightarrow{j_*} [\Sigma X^n,B] \xrightarrow{\varphi_*} [\Sigma X^n,Y],$$
we have an exact sequence of inverse systems of groups
$$1 \to \{ L_n\} \to \{ [\Sigma X^n,F]\} \to \{ K_n\} \to 1.$$
Then by Lemma \ref{6-term}, we get an exact sequence of pointed sets
$$\limone L_n\to\limone[\Sigma X^n,F]\to\limone K_n\to *.$$
Thus $\limone K_n = *$ if and only if the map $\limone L_n \rightarrow \limone [\Sigma X^n, F]$ is surjective.

Next we put $M_n = \mathrm{Ker} \{ \delta_* \colon [\Sigma X^n, \Omega Y] \to [\Sigma X^n, F]\}$. Similarly to the above, from the exact sequence of groups
$$[\Sigma X^n , \Omega Y] \xrightarrow{\delta_*} [\Sigma X^n, F] \xrightarrow{j_*} [\Sigma X^n, Y],$$
we get an exact sequence of inverse systems of groups
$$1 \to \{ M_n \} \to \{ [\Sigma X^n, \Omega Y] \} \to \{ L_n \} \to 1.$$
Thus by Lemma \ref{6-term}, we have that  $\limone [\Sigma X^n, \Omega Y] \to \limone L_n$ is surjective. Then $\limone K_n=*$ if and only if the composite $\limone [\Sigma X^n, \Omega Y] \to \limone L_n\to\limone [\Sigma X^n, F]$ is surjective. By Proposition \ref{absolute_Ph}, this composite is identified with $\delta_*\colon\Ph(X,\Omega Y)\to\Ph(X,F)$. Thus the proof is completed.
\end{proof}

As we have given a rational homotopy condition for the triviality of $\Ph(\Sigma X,\varphi)$ in Proposition \ref{rational_equiv}, we expect to find a rational homotopy condition for the relative triviality of $\Ph(\Sigma X,\varphi)$. McGibbon and Roitberg \cite{MR} gave a necessary and sufficient rational homotopy condition which guarantees that every phantom map $X\to Y$ is null homotopic, and we are motivated by their result to consider a rational homotopy condition for the relative triviality of $\Ph(\Sigma X,\varphi)$. We first recall the result of Roitberg and Touhey \cite{RT}.

\begin{theorem}
[Roitberg and Touhey \cite{RT}]
\label{RT}
For $Y\in\F$, there is an isomorphism of pointed sets
\begin{equation}
\label{Ph_cohomology}
\Ph(X,Y)\cong\prod_{n\ge 1}H^n(X;\pi_{n+1}(Y)\otimes\widehat{\Z}/\Z)/[X,\Omega\widehat{Y}]
\end{equation}
which is natural with respect to $X$ and $Y$, where $\widehat{\Z}$ is the profinite completion of the integer ring $\Z$ and $\widehat{Y}$ is the profinite completion of a space $Y$ in the sense of Sullivan.
\end{theorem}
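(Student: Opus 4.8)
The plan is to realize $\Ph(X,Y)$ as a quotient of a single homotopy set attached to the profinite completion $\eta\colon Y\to\widehat{Y}$, and then to evaluate that set by rational homotopy theory. Let $F$ denote the homotopy fiber of $\eta$, giving a homotopy fibration $F\xrightarrow{i}Y\xrightarrow{\eta}\widehat{Y}$ whose connecting map yields $\partial\colon[X,\Omega\widehat{Y}]\to[X,F]$. The first goal is the identification $\Ph(X,Y)=\mathrm{Ker}(\eta_*\colon[X,Y]\to[X,\widehat{Y}])$; granting this, the fibration immediately gives $\Ph(X,Y)=\mathrm{Im}\,i_*=[X,F]/\mathrm{Im}\,\partial$, which is exactly the quotient $[X,F]/[X,\Omega\widehat{Y}]$ appearing in \eqref{Ph_cohomology} once $[X,F]$ is computed.

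To prove $\Ph(X,Y)=\mathrm{Ker}\,\eta_*$ I would argue the two inclusions separately. For $\Ph(X,Y)\subseteq\mathrm{Ker}\,\eta_*$, observe that a phantom map followed by $\eta$ is a phantom map into $\widehat{Y}$; but by Proposition \ref{absolute_Ph} we have $\Ph(X,\widehat{Y})\cong\limone[\Sigma X^n,\widehat{Y}]$, and since each $[\Sigma X^n,\widehat{Y}]$ is a profinite (hence compact) group, this $\limone$ vanishes, so $\Ph(X,\widehat{Y})=*$ and $\eta\circ f\simeq *$. For the reverse inclusion, suppose $\eta\circ f\simeq*$; then $f|_{X^n}$ is null in $\widehat{Y}$ for every $n$, and since $X^n$ is a finite complex the fracture (arithmetic) square for the nilpotent finite-type space $Y$, together with the injectivity of $[X^n,Y_\Q]\to[X^n,\widehat{Y}_\Q]$ induced by the split monomorphism $\Q\to\widehat{\Z}\otimes\Q$ on coefficients, forces $f|_{X^n}\simeq*$. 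Hence $f$ is phantom.

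It remains to compute $[X,F]$. From the long exact homotopy sequence of $F\to Y\to\widehat{Y}$ and $\pi_n(\widehat{Y})\cong\pi_n(Y)\otimes\widehat{\Z}$ (where $\pi_n(Y)\to\pi_n(Y)\otimes\widehat{\Z}$ is injective for finitely generated $\pi_n$), one reads off $\pi_n(F)\cong\pi_{n+1}(Y)\otimes\widehat{\Z}/\Z$. In particular every homotopy group of $F$ is a $\Q$-vector space, so $F$ is rational. The crucial point is that $F$ is in fact a generalized Eilenberg--MacLane space $\prod_{n\ge1}K(\pi_{n+1}(Y)\otimes\widehat{\Z}/\Z,n)$: identifying $F$ with the homotopy fiber of $Y_\Q\to\widehat{Y}_\Q$, this map is induced on minimal models by the split inclusion of coefficient rings $\Q\to\widehat{\Z}\otimes\Q$, and the nonlinear part of the differential, being already defined over $\Q$, does not contribute to the fiber, so the rational Postnikov invariants of $F$ vanish. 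Granting this, $[X,F]\cong\prod_{n\ge1}H^n(X;\pi_{n+1}(Y)\otimes\widehat{\Z}/\Z)$, and combining with the first step produces \eqref{Ph_cohomology}; naturality in $X$ and $Y$ follows from that of $\eta$ and of the fibration $F\to Y\to\widehat{Y}$.

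The main obstacle is precisely the identification of $F$ as a generalized Eilenberg--MacLane space, that is, the vanishing of its rational $k$-invariants. This is where the arithmetic of the coefficients is essential: it is the splitting $\widehat{\Z}\otimes\Q\cong\Q\oplus(\widehat{\Z}/\Z)$ of $\Q$-vector spaces that simultaneously produces the coefficient group $\widehat{\Z}/\Z$ and annihilates the potential obstructions. A secondary technical point is the careful use of the fracture square for the finite skeleta $X^n$, which requires $Y$ to be nilpotent of finite type; to accommodate the possibly non-nilpotent fundamental groups permitted by the class $\F$ (which constrains only $\pi_n$ for $n\ge2$) one uses that $\pi_1$ never enters the range $n\ge1$ of \eqref{Ph_cohomology}, so one may argue after passing to a suitable cover or componentwise.
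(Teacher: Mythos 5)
The paper does not actually prove this statement: it is quoted verbatim from Roitberg--Touhey \cite{RT}, and the only argument the paper supplies is the remark immediately following it, which reduces the case of a general $Y\in\F$ (whose $\pi_1$ may be badly behaved) to the hypotheses of \cite{RT} by replacing $Y$ with its universal cover via Proposition \ref{absolute_Ph}. What you have written is, in substance, a reconstruction of the strategy behind the cited result: identify $\Ph(X,Y)$ with $\mathrm{Ker}\bigl(\eta_*\colon[X,Y]\to[X,\widehat{Y}]\bigr)$ for the profinite completion $\eta$, and then compute the homotopy fiber $F$ of $\eta$. That is the right road map, and your first inclusion (via vanishing of $\limone$ for towers of profinite groups) is fine.

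There are, however, two genuine gaps if this is meant as a proof rather than a summary. First and most seriously, the central claim that $F$ is a generalized Eilenberg--MacLane space $\prod_{n\ge1}K(\pi_{n+1}(Y)\otimes\widehat{\Z}/\Z,n)$ is precisely the main theorem of \cite{RT}; the sentence ``the nonlinear part of the differential, being already defined over $\Q$, does not contribute to the fiber'' does not establish the vanishing of the $k$-invariants of $F$. The additive splitting $\widehat{\Z}\otimes\Q\cong\Q\oplus(\widehat{\Z}/\Z)$ of coefficients does not by itself produce a splitting at the level of minimal models compatible with the (quadratic and higher) part of the differential, and making this precise is exactly where the work of \cite{RT} lies. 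Second, in the inclusion $\mathrm{Ker}\,\eta_*\subseteq\Ph(X,Y)$, the arithmetic square only yields that $f|_{X^n}$ lies in the image of the connecting map $[X^n,\Omega(\widehat{Y})_\Q]\to[X^n,Y]$ (equivalently, that $f|_{X^n}$ lifts to the rational space $F$); to conclude $f|_{X^n}\simeq *$ you still need the standard divisibility argument that such classes die in the finitely generated nilpotent homotopy set $[X^n,Y]$ for $X^n$ finite. Finally, the $\pi_1$ issue you defer to ``passing to a suitable cover'' should be carried out as the paper does: since $\Sigma X^n$ is simply connected, $\Ph(X,Y)\cong\limone[\Sigma X^n,Y]\cong\limone[\Sigma X^n,\widetilde{Y}]\cong\Ph(X,\widetilde{Y})$ by Proposition \ref{absolute_Ph}, and $\widetilde{Y}$ satisfies the hypotheses of \cite{RT} with $\pi_{n+1}(\widetilde{Y})=\pi_{n+1}(Y)$ for $n\ge1$. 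If you do not intend to reprove Roitberg--Touhey in full, the correct move here is simply to cite it and supply only this reduction.
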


\begin{remark}
Although more conditions on $Y$ are assumed in \cite{RT}, we may replace $Y$ with its universal cover by Proposition \ref{absolute_Ph} so that the conditions reduce to that $Y\in\F$.
\end{remark}

Next we apply Theorem \ref{RT} to the induced map between absolute phantom maps. For a map $g\colon V\to W$, we put
$$\hat{q}(g)=\{n\ge 2\,\vert\,g_* \colon \pi_n(V)\otimes\Q \to \pi_n(W)\otimes\Q\text{ is not surjective}\}.$$

\begin{lemma}
\label{Ph_surj}
Given a map $g\colon V\to W$ for $V,W\in\F$, suppose that $H_{n-1}(X ; \Q) = 0$ for $n\in\hat{q}(g)$. Then $g_*\colon \Ph(X,V) \to \Ph(X,W)$ is surjective.
\end{lemma}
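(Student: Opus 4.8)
The plan is to reduce the surjectivity of $g_*\colon\Ph(X,V)\to\Ph(X,W)$ to a computation using the Roitberg--Touhey description \eqref{Ph_cohomology} of absolute phantom maps. Since that isomorphism is natural in the target, the map $g_*$ is identified with the map induced by $g$ on the quotient
$$\prod_{n\ge 1}H^n(X;\pi_{n+1}(V)\otimes\widehat{\Z}/\Z)/[X,\Omega\widehat V]\longrightarrow\prod_{n\ge 1}H^n(X;\pi_{n+1}(W)\otimes\widehat{\Z}/\Z)/[X,\Omega\widehat W].$$
So the first step is to exhibit a commutative square relating $g_*$ to the map $g_*\otimes\mathrm{id}\colon H^n(X;\pi_{n+1}(V)\otimes\widehat\Z/\Z)\to H^n(X;\pi_{n+1}(W)\otimes\widehat\Z/\Z)$ on the numerators, and then argue that surjectivity on the numerators (in each degree) forces surjectivity on the quotients. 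The latter implication is formal: a surjection of the top rows of a commuting square of quotient maps descends to a surjection of quotients.

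The heart of the matter is therefore to show that for each $n$ the map
$$g_*\colon H^n(X;\pi_{n+1}(V)\otimes\widehat\Z/\Z)\to H^n(X;\pi_{n+1}(W)\otimes\widehat\Z/\Z)$$
is surjective under the hypothesis $H_{n-1}(X;\Q)=0$ for $n\in\hat q(g)$. Here I would use the universal coefficient theorem to express $H^n(X;-)$ in terms of $\mathrm{Hom}(H_n(X;\Z),-)$ and $\mathrm{Ext}(H_{n-1}(X;\Z),-)$, both taken with coefficients in $\pi_{n+1}(\cdot)\otimes\widehat\Z/\Z$. The key observation is that $\widehat\Z/\Z$ is a rational vector space (it is a $\Q$-vector space, being divisible and torsion-free), so the relevant coefficient groups $\pi_{n+1}(V)\otimes\widehat\Z/\Z$ only ``see'' the rational homotopy $\pi_{n+1}(V)\otimes\Q$, and likewise the cohomology only sees $H_*(X;\Q)$ through these divisible coefficients.

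The main obstacle is controlling the failure of surjectivity of the coefficient map $\pi_{n+1}(V)\otimes\widehat\Z/\Z\to\pi_{n+1}(W)\otimes\widehat\Z/\Z$ in exactly the degrees $n+1\in\hat q(g)$, where rational surjectivity fails. The point is that if $g_*\colon\pi_{n+1}(V)\otimes\Q\to\pi_{n+1}(W)\otimes\Q$ is surjective (i.e. $n+1\notin\hat q(g)$), then after tensoring with the flat, divisible $\Q$-algebra $\widehat\Z/\Z$ the coefficient map stays surjective, and the induced map on $H^n(X;-)$ is surjective regardless of $X$. When $n+1\in\hat q(g)$, surjectivity of the coefficient map can genuinely fail, but then the indexing is arranged so that the contribution of that degree to $H^n(X;-)$ comes from $\mathrm{Hom}(H_n(X),-)$ together with an $\mathrm{Ext}$ term built from $H_{n-1}(X)$; the hypothesis $H_{n-1}(X;\Q)=0$ kills the $\mathrm{Ext}(H_{n-1}(X;\Z),-)$ contribution against the divisible coefficients, so the only obstruction to surjectivity is removed.

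I would therefore organize the proof as follows: first invoke \eqref{Ph_cohomology} and its naturality to reduce to the numerators; second, fix $n$ and split into the cases $n+1\notin\hat q(g)$ (where surjectivity is automatic) and $n+1\in\hat q(g)$; third, in the latter case apply universal coefficients and use $H_{n-1}(X;\Q)=0$ to show the potentially-obstructing $\mathrm{Ext}$ summand vanishes, leaving a $\mathrm{Hom}$ term on which the map is still onto; finally, assemble the degreewise surjections into surjectivity of the product and descend to the quotient. The delicate bookkeeping is matching the homotopy-degree shift (the appearance of $\pi_{n+1}$ against $H^n$ and $H_{n-1}$) with the definition of $\hat q(g)$, and verifying that the divisibility of $\widehat\Z/\Z$ makes the $\mathrm{Ext}$ term depend only on the torsion-free, i.e. rational, part of $H_{n-1}(X;\Z)$.
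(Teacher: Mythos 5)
Your overall strategy is the paper's own: its entire proof is ``the isomorphism of Theorem \ref{RT} is natural in $Y$ and $\widehat{\Z}/\Z$ is a $\Q$-vector space,'' and your reduction to degreewise surjectivity on the numerators of \eqref{Ph_cohomology}, followed by descent to the quotient, is the right way to flesh that out. The formal steps are fine (a product of surjections is surjective; a surjection upstairs descends to a surjection of quotient sets), as is the case $n+1\notin\hat{q}(g)$: there the coefficient map $\pi_{n+1}(V)\otimes\widehat{\Z}/\Z\to\pi_{n+1}(W)\otimes\widehat{\Z}/\Z$ is a surjection of $\Q$-vector spaces, hence split, hence induces a surjection on $H^n(X;-)$ with no hypothesis on $X$.

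The gap is in the case $n+1\in\hat{q}(g)$, where your bookkeeping is off by one and you consequently assign the hypothesis to the wrong summand of the universal coefficient decomposition. In the term $H^n(X;\pi_{n+1}(-)\otimes\widehat{\Z}/\Z)$ the homotopy degree is $m=n+1$, so the lemma's hypothesis ``$H_{m-1}(X;\Q)=0$ for $m\in\hat{q}(g)$'' yields $H_{n}(X;\Q)=0$, not $H_{n-1}(X;\Q)=0$. The summand $\mathrm{Ext}(H_{n-1}(X;\Z),A)$ with $A=\pi_{n+1}(W)\otimes\widehat{\Z}/\Z$ needs no hypothesis at all: $A$ is divisible, hence injective as an abelian group, so this Ext group vanishes identically. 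The genuine obstruction sits in the $\mathrm{Hom}(H_n(X;\Z),A)$ summand, and your claim that the map ``is still onto'' there is false in general: if $H_n(X;\Z)$ has positive free rank and the coefficient map is not surjective (which is exactly what $n+1\in\hat{q}(g)$ means), then applying $\mathrm{Hom}(H_n(X;\Z),-)$ does not restore surjectivity --- for instance $\mathrm{Hom}(\Z,0)\to\mathrm{Hom}(\Z,\widehat{\Z}/\Z)$ arising from $\pi_{n+1}(V)\otimes\Q=0$ and $\pi_{n+1}(W)\otimes\Q=\Q$. The correct mechanism is that the properly indexed hypothesis $H_n(X;\Q)=0$ gives $\mathrm{Hom}(H_n(X;\Z),A)\cong\mathrm{Hom}_{\Q}(H_n(X;\Z)\otimes\Q,A)=0$ because $A$ is a $\Q$-vector space, so the entire target group $H^n(X;A)$ vanishes and surjectivity is automatic. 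With that correction your outline closes up and agrees with what the paper leaves implicit.
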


\begin{proof}
Since the isomorphism of Theorem \ref{RT} is natural with respect to $Y$, the lemma immediately follows from the fact that $\widehat{\Z}/\Z$ is a $\Q$-vector space.
\end{proof}

Put 
$$q(\varphi)=\{n\ge 2\,\vert\,\varphi_*\colon\pi_{n}(B)\otimes\Q\to\pi_{n}(Y)\otimes\Q\text{ is not injective}\}.$$
Now we give a rational homotopy condition for the relative triviality of $\Ph(\Sigma X,\varphi)$.

\begin{theorem}
\label{criterion_4}
Let $B,Y\in\F$. If $H_{n-1}(X;\Q)=0$ for $n\in q(\varphi)$, then $\Ph(\Sigma X, \varphi)$ is relatively trivial. 
\end{theorem}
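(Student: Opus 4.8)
The plan is to chain together Corollary \ref{criterion_2}, Lemma \ref{criterion_3}, and Lemma \ref{Ph_surj}, with the whole argument resting on a single rational homotopy computation that identifies $\hat{q}(\delta)$ with $q(\varphi)$. First I would extend $\varphi$ to a homotopy fibration $F \xrightarrow{j} B \xrightarrow{\varphi} Y$ with connecting map $\delta \colon \Omega Y \to F$. By Corollary \ref{criterion_2} it suffices to prove $\limone K_n = *$, and by Lemma \ref{criterion_3} this is equivalent to the surjectivity of $\delta_* \colon \Ph(X, \Omega Y) \to \Ph(X, F)$.

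To obtain this surjectivity I would apply Lemma \ref{Ph_surj} to the map $g = \delta \colon \Omega Y \to F$. This requires $\Omega Y, F \in \F$ together with $H_{n-1}(X;\Q) = 0$ for $n \in \hat{q}(\delta)$. The membership $\Omega Y \in \F$ is immediate from $\pi_n(\Omega Y) \cong \pi_{n+1}(Y)$, and $F \in \F$ follows because the long exact homotopy sequence presents each $\pi_n(F)$ as an extension built from a subgroup of $\pi_n(B)$ and a quotient of $\pi_{n+1}(Y)$, both finitely generated.

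The key step---and the only nonformal one---is the identification $\hat{q}(\delta) = q(\varphi)$. Rationalizing the long exact sequence of the fibration (which stays exact since $\Q$ is flat) and using $\pi_n(\Omega Y)\otimes\Q \cong \pi_{n+1}(Y)\otimes\Q$, the map $\delta_*$ on $\pi_n \otimes \Q$ is the connecting homomorphism $\partial \colon \pi_{n+1}(Y)\otimes\Q \to \pi_n(F)\otimes\Q$. By exactness at $\pi_n(F)\otimes\Q$, the map $\partial$ is surjective precisely when $j_* \colon \pi_n(F)\otimes\Q \to \pi_n(B)\otimes\Q$ vanishes, and by exactness at $\pi_n(B)\otimes\Q$ this vanishing is equivalent to the injectivity of $\varphi_* \colon \pi_n(B)\otimes\Q \to \pi_n(Y)\otimes\Q$. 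Hence $n \in \hat{q}(\delta)$ if and only if $n \in q(\varphi)$, so the two index sets coincide.

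Combining these observations, the hypothesis $H_{n-1}(X;\Q) = 0$ for $n \in q(\varphi)$ is exactly the hypothesis of Lemma \ref{Ph_surj} for the map $\delta$, which yields the surjectivity of $\delta_*$, hence $\limone K_n = *$, hence the relative triviality of $\Ph(\Sigma X, \varphi)$ by Corollary \ref{criterion_2}. I expect no serious obstacle: the real content is the exact-sequence translation of ``$\varphi_*$ is not injective at level $n$'' into ``$\delta_*$ is not surjective at level $n$,'' and everything surrounding it is a direct invocation of the machinery already assembled in this section.
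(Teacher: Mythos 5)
Your proposal is correct and follows essentially the same route as the paper: reduce to $\limone K_n = *$ via Corollary \ref{criterion_2}, translate that to surjectivity of $\delta_*$ on absolute phantom sets via Lemma \ref{criterion_3}, and invoke Lemma \ref{Ph_surj} after identifying $\hat{q}(\delta)$ with $q(\varphi)$ through the rationalized homotopy exact sequence of the fibration. The paper's proof is just a terser version of the same argument; your extra checks that $\Omega Y, F \in \F$ are a harmless (and welcome) elaboration.
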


\begin{proof}
Let $F$ be the homotopy fiber of $\varphi\colon B\to Y$ and $\delta\colon\Omega Y\to F$ be the corresponding connecting map. By the homotopy exact sequence, $\pi_n(\Omega Y) \otimes \Q \to \pi_n(F) \otimes \Q$ is surjective if and only if $\varphi_* \colon \pi_n(B) \otimes \Q \to \pi_n(Y) \otimes \Q$ is injective for $n\ge 2$. Then we have $q(\varphi)=\hat{q}(\delta)$. Thus the proof is completed by Corollary \ref{criterion_2} and Lemmas \ref{criterion_3} and \ref{Ph_surj}. 
\end{proof}

We give three corollaries of this theorem.

\begin{corollary}
Let $B,Y\in\F$. If $\varphi_* \colon \pi_n(B) \otimes \Q \to \pi_n(Y) \otimes \Q$ is injective for $n \ge 2$, then $\Ph(\Sigma X, \varphi)$ is relatively trivial.
\end{corollary}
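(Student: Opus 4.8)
The plan is to read this corollary as the degenerate case of Theorem \ref{criterion_4} in which the exceptional set $q(\varphi)$ is empty. First I would unwind the hypothesis against the definition of $q(\varphi)$. By definition,
$$q(\varphi)=\{n\ge 2\,\vert\,\varphi_*\colon\pi_{n}(B)\otimes\Q\to\pi_{n}(Y)\otimes\Q\text{ is not injective}\},$$
so to say that $\varphi_*\colon\pi_n(B)\otimes\Q\to\pi_n(Y)\otimes\Q$ is injective for \emph{every} $n\ge 2$ is precisely to say that this exceptional set is empty, i.e. $q(\varphi)=\emptyset$. The only point requiring any care is this bookkeeping of quantifiers: converting the universal statement ``injective for all $n$'' into the statement ``the set of bad indices is empty.''

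Once that is in hand, the rest is automatic. The hypothesis of Theorem \ref{criterion_4} asks that $H_{n-1}(X;\Q)=0$ for all $n\in q(\varphi)$; since $q(\varphi)=\emptyset$, there are no indices $n$ to test, and the condition holds vacuously for \emph{any} source space $X$. Therefore Theorem \ref{criterion_4} applies directly and yields that $\Ph(\Sigma X,\varphi)$ is relatively trivial, as claimed.

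I do not expect a genuine obstacle here, since the statement is a specialization of the immediately preceding theorem rather than a new result. One could in principle bypass Theorem \ref{criterion_4} and argue from the ground up: the injectivity hypothesis forces $q(\varphi)=\hat q(\delta)=\emptyset$ (via the rational homotopy exact sequence relating $\varphi$ to the connecting map $\delta\colon\Omega Y\to F$ used in the proof of Theorem \ref{criterion_4}), so $\delta_*\colon\Ph(X,\Omega Y)\to\Ph(X,F)$ is surjective by Lemma \ref{Ph_surj}, whence $\limone K_n=*$ by Lemma \ref{criterion_3} and relative triviality follows from Corollary \ref{criterion_2}. But this merely re-traces the proof of Theorem \ref{criterion_4} in the empty-index case, so invoking the theorem itself is the cleaner route.
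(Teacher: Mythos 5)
Your proposal is correct and matches the paper's (implicit) argument: the paper states this corollary without proof precisely because the hypothesis makes $q(\varphi)=\emptyset$, so the condition of Theorem \ref{criterion_4} holds vacuously for any $X$. Nothing further is needed.
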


For a space $A$, we put 
$$q(A)=\{n\ge 2\,\vert\,\pi_n(A)\otimes\Q\ne 0\}.$$

\begin{corollary}
\label{criterion_5}
Let $B,Y\in\F$. If $H_{n-1}(X;\Q)=0$ for $n\in q(F)$, then $\Ph(\Sigma X, \varphi)$ is relatively trivial, where $F$ is the homotopy fiber of $\varphi\colon B\to Y$.
\end{corollary}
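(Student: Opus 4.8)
The plan is to derive this directly from Theorem \ref{criterion_4} by comparing the two index sets $q(\varphi)$ and $q(F)$. The key observation I would establish is the inclusion $q(\varphi)\subseteq q(F)$. Granting this, the hypothesis that $H_{n-1}(X;\Q)=0$ for every $n\in q(F)$ immediately yields $H_{n-1}(X;\Q)=0$ for every $n\in q(\varphi)$, and then Theorem \ref{criterion_4} gives that $\Ph(\Sigma X,\varphi)$ is relatively trivial.

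To prove $q(\varphi)\subseteq q(F)$, I would use the long exact homotopy sequence of the homotopy fibration $F\xrightarrow{j}B\xrightarrow{\varphi}Y$. Since $-\otimes\Q$ is exact, tensoring this sequence with $\Q$ preserves exactness, so for each $n\ge 2$ we have an exact sequence
$$\cdots\to\pi_n(F)\otimes\Q\xrightarrow{j_*}\pi_n(B)\otimes\Q\xrightarrow{\varphi_*}\pi_n(Y)\otimes\Q\to\cdots.$$
By exactness at the middle term, $\mathrm{Ker}(\varphi_*\colon\pi_n(B)\otimes\Q\to\pi_n(Y)\otimes\Q)=\mathrm{Im}(j_*)$. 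Hence if $n\in q(\varphi)$, that is, $\varphi_*$ is not injective, then $\mathrm{Im}(j_*)\ne 0$, which forces $\pi_n(F)\otimes\Q\ne 0$, i.e.\ $n\in q(F)$. This establishes the desired inclusion, and completes the proof by the reduction above.

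I do not expect any genuine obstacle here: the entire content is the exactness bookkeeping above, which is essentially the same rationalized long exact sequence already used in the proof of Theorem \ref{criterion_4} (where $q(\varphi)=\hat{q}(\delta)$ was obtained). The only point requiring care is the direction of the inclusion---namely that it is $q(\varphi)\subseteq q(F)$ and not the reverse, the inclusion possibly being strict when $j_*$ vanishes while $\pi_n(F)\otimes\Q\ne 0$---so that the hypothesis of the corollary is the stronger of the two and indeed implies the hypothesis of Theorem \ref{criterion_4}.
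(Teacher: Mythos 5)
Your argument is correct and is exactly the paper's proof: the paper also deduces the corollary from Theorem \ref{criterion_4} by noting $q(\varphi)\subset q(F)$ via the rationalized long exact homotopy sequence of the fibration $F\to B\xrightarrow{\varphi}Y$. Your write-up just spells out the exactness bookkeeping that the paper leaves implicit.
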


\begin{proof}
By the homotopy exact sequence of the homotopy fibration $F\to Y\xrightarrow{\varphi}B$, we see that $q(\varphi)\subset q(F)$. Thus the proof is done by Theorem \ref{criterion_4}.
\end{proof}

\begin{corollary}
\label{criterion_6}
Let $B,Y\in\F$ and $F\xrightarrow{j}B\xrightarrow{\varphi}Y$ be a homotopy fibration such that $j$ is null homotopic. Then $\Ph(\Sigma X,\varphi)$ is relatively trivial.
\end{corollary}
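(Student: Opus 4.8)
The plan is to verify the hypothesis $\limone K_n=*$ of Corollary \ref{criterion_2}, from which the relative triviality of $\Ph(\Sigma X,\varphi)$ follows immediately. By Lemma \ref{criterion_3} applied to the homotopy fibration $F\xrightarrow{j}B\xrightarrow{\varphi}Y$ with connecting map $\delta\colon\Omega Y\to F$, the condition $\limone K_n=*$ is equivalent to the surjectivity of $\delta_*\colon\Ph(X,\Omega Y)\to\Ph(X,F)$. Since $\Ph(X,-)$ is functorial in its target, it therefore suffices to produce a right homotopy inverse of $\delta$: if $\sigma\colon F\to\Omega Y$ satisfies $\delta\circ\sigma\simeq 1_F$, then $\delta_*\circ\sigma_*=(\delta\circ\sigma)_*=\mathrm{id}$ on $\Ph(X,F)$, so $\delta_*$ is surjective.

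The key step is to extract such a $\sigma$ from the null homotopy of $j$. The given fibration extends to the fibration sequence
$$\Omega Y\xrightarrow{\delta}F\xrightarrow{j}B\xrightarrow{\varphi}Y,$$
in which $\Omega Y$ is the homotopy fiber of $j\colon F\to B$ and $\delta$ is the corresponding fiber inclusion. The relevant general fact is that for a null-homotopic map $j\colon F\to B$, the homotopy fiber is homotopy equivalent to $F\times\Omega B$ in such a way that the fiber inclusion corresponds to the projection onto the first factor; the inclusion $F=F\times\{*\}\hookrightarrow F\times\Omega B$ is then a genuine section in the literal constant case, and a right homotopy inverse after transporting through the equivalence $\Omega Y\simeq F\times\Omega B$ in general. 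This yields the desired $\sigma\colon F\to\Omega Y$ with $\delta\circ\sigma\simeq 1_F$.

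Combining these, $\delta_*$ is surjective, so Lemma \ref{criterion_3} gives $\limone K_n=*$, and Corollary \ref{criterion_2} shows that $\Ph(\Sigma X,\varphi)$ is relatively trivial. I expect the only delicate point to be the identification in the second paragraph: one must check that the fiber inclusion of a null-homotopic map splits up to homotopy, which relies on the standard fact that homotopic maps have homotopy-fiber sequences agreeing up to equivalence compatibly with the fiber inclusions. Note that, in contrast with Corollaries \ref{criterion_5} and \ref{criterion_4}, no rational vanishing hypothesis on $X$ is needed here, precisely because the null homotopy of $j$ makes $\delta_*$ surjective for \emph{every} space $X$.
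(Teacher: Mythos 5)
Your proof is correct, but it takes a genuinely different route from the paper. The paper states this as one of three corollaries of Theorem \ref{criterion_4}: since $j\simeq *$ forces $j_*=0$ on homotopy groups, exactness of the homotopy sequence of the fibration makes $\varphi_*\colon\pi_n(B)\to\pi_n(Y)$ injective, hence (by flatness of $\Q$) $q(\varphi)=\emptyset$, so the rational vanishing hypothesis of Theorem \ref{criterion_4} is vacuous. That argument ultimately rests on the Roitberg--Touhey description of $\Ph(X,Y)$ (Theorem \ref{RT}) via Lemma \ref{Ph_surj}, which is where the hypothesis $B,Y\in\F$ enters. You instead go directly through Corollary \ref{criterion_2} and Lemma \ref{criterion_3}, producing a right homotopy inverse $\sigma$ of the connecting map $\delta\colon\Omega Y\to F$ from the standard splitting $\mathrm{hofib}(j)\simeq F\times\Omega B$ for a null-homotopic $j$, whence $\delta_*$ is surjective on $\Ph(X,-)$ for formal reasons. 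Your identification of $\delta$ with the first-factor projection under this splitting is the only delicate point, and it is the standard one (it is the same splitting the paper invokes in the example of the Hopf fibration $S^3\to S^{4n+3}\to\mathbb{H}P^n$). What your argument buys is that it bypasses Theorem \ref{RT} and all rational homotopy input entirely: in particular the hypothesis $B,Y\in\F$ is not needed, and you obtain the stronger intermediate conclusion $\limone K_n=*$ rather than merely the relative triviality. What the paper's route buys is uniformity: all three corollaries fall out of the single rational criterion of Theorem \ref{criterion_4} with no additional geometric argument.
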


We close this section with the following example.

\begin{example}
By definition, if $\Ph(\Sigma X,\varphi)$ is trivial, then it is relatively trivial. Here we give a space $X$ and a map $\varphi$ such that the converse of this implication does not hold, that is, $\Ph(\Sigma X,\varphi)$ is relatively trivial and is not trivial.

Let $S^3\to S^{4n+3}\xrightarrow{p_n}\mathbb{H}P^n$ be the Hopf fibration. Since the fiber inclusion $S^3\to S^{4n+3}$ is null homotopic, $\Ph(\Sigma X,p_n)$ is relatively trivial by Corollary \ref{criterion_6}. By Corollary \ref{non-trivial 2}, we also have that $\Ph(\Sigma X,p_n)$ is trivial if and only if $\Ph(X,S^3)=*$. Then since $\Ph(\C P^\infty,S^3)\ne*$ by \cite{G}, we get that $\Ph(\Sigma\C P^\infty,p_n)$ is not trivial. Thus we have obtained that $\Ph(\Sigma\C P^\infty,p_n)$ is relatively trivial and is not trivial.
\end{example}


\section{Triviality of relative phantom maps out of a non-suspension}

In this section, we consider Problem \ref{generalized-top-dBE}. By Corollary \ref{Ph=0}, we have $\Ph(X,B_n)=*$ for all $X$, so the triviality and the relative triviality of phantom maps out of a suspension to $s_n \colon B\to B_n$ are the same. The case of relative phantom maps out of a suspension in Problem \ref{generalized-top-dBE} has been studied in the previous sections. In particular, by Example \ref{Ganea_fibration}, $\Ph(\Sigma X,i_n)$ is trivial for the inclusion $i_n\colon\R P^n\to\R P^\infty$. Thus we consider relative phantom maps out of a non-suspension for Problem \ref{generalized-top-dBE}. When $X$ is not a suspension, the Puppe exact sequence associated with skeleta of $X$ is not an exact sequence of groups, so we cannot use Lemma \ref{6-term} which has been fundamental in many places above. Instead, we will use the following lemma.

\begin{lemma}
[cf. {\cite[Lemma 1.1.5]{RZ}}]
\label{surj}
Let $\{f_n\}\colon\{G_n\}\to\{H_n\}$ be a continuous map between inverse systems of compact Hausdorff topological spaces. Then the map $\lim\,f_n\colon\lim\,G_n\to\lim\,H_n$ is surjective whenever each $f_n\colon G_n\to H_n$ is so.
\end{lemma}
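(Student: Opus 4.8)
The plan is to reduce the surjectivity of $\lim\,f_n$ to the classical fact that the inverse limit of an inverse system of \emph{nonempty} compact Hausdorff spaces is nonempty. Fix a point $y=(y_n)\in\lim\,H_n$; the goal is to produce a preimage in $\lim\,G_n$. The natural thing is to work fiberwise: for each $n$ set $F_n=f_n^{-1}(y_n)\subseteq G_n$. Since $f_n$ is surjective, $F_n\ne\emptyset$, and since $G_n$ is Hausdorff the point $y_n$ is closed, so $F_n$ is a closed, hence compact, subspace of $G_n$.

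First I would check that the $F_n$ assemble into an inverse subsystem. Write $g_{mn}\colon G_m\to G_n$ and $h_{mn}\colon H_m\to H_n$ for the structure maps ($m\ge n$); the hypothesis that $\{f_n\}$ is a map of inverse systems gives $f_n\circ g_{mn}=h_{mn}\circ f_m$. Hence for $x_m\in F_m$ one has $f_n(g_{mn}(x_m))=h_{mn}(f_m(x_m))=h_{mn}(y_m)=y_n$, the last equality because $y\in\lim\,H_n$; thus $g_{mn}$ restricts to a map $F_m\to F_n$. So $\{F_n\}$ is an inverse system of nonempty compact Hausdorff spaces, and any element of $\lim\,F_n$ is exactly a point $x=(x_n)\in\lim\,G_n$ with $f_n(x_n)=y_n$ for all $n$, i.e. a preimage of $y$ under $\lim\,f_n$.

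It therefore remains to show $\lim\,F_n\ne\emptyset$, which is the only substantive point. I would realize $\lim\,F_n$ as an intersection of closed subsets of the compact Hausdorff space $\prod_n F_n$ (compact by Tychonoff, nonempty since each $F_n$ is): for each $N$ let $A_N$ consist of those $(x_k)$ with $g_{n+1,n}(x_{n+1})=x_n$ for all $n<N$. Each $A_N$ is closed, being a finite intersection of equalizers of continuous maps into a Hausdorff space, and the $A_N$ are nested decreasing with $\bigcap_N A_N=\lim\,F_n$ (for an $\mathbb{N}$-indexed tower, compatibility on consecutive maps forces compatibility for all $m\ge n$). Each $A_N$ is nonempty: choose any $x_N\in F_N$, propagate downward by $x_n=g_{N,n}(x_N)\in F_n$ for $n<N$, and fill in the remaining coordinates arbitrarily. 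By the finite intersection property in the compact space $\prod_n F_n$, the intersection $\bigcap_N A_N$ is nonempty, yielding the desired point of $\lim\,F_n$.

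The one place where compactness is essential — and hence the main (indeed essentially the sole) obstacle — is this last nonemptiness step: without compactness the descending intersection $\bigcap_N A_N$ could be empty, exactly as happens for a non--Mittag-Leffler inverse system of sets with surjective but ``vanishing'' bonding behaviour. Everything else is formal diagram chasing. Since the statement is quoted as \cite[Lemma 1.1.5]{RZ}, one may alternatively simply invoke it, but the argument above is self-contained.
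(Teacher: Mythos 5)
Your proof is correct. The paper itself gives no proof of this lemma --- it is simply quoted from Ribes--Zalesskii \cite{RZ} --- and your self-contained argument (pass to the fibers $F_n=f_n^{-1}(y_n)$, observe they form an inverse system of nonempty compact Hausdorff spaces, and extract a point of $\lim F_n$ via the finite intersection property in $\prod_n F_n$) is exactly the standard argument behind the cited result, with the compactness step correctly identified as the only place where anything nontrivial happens.
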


Let $V$ be a finite complex and $W$ be a torsion space, that is, $\widetilde{H}_n(W;\Q)=0$ for any $n$. Then it is well known that the homotopy set $[V,W]$ is finite. We generalize this fact in two cases. The first case is the following.

\begin{lemma}
\label{finite1}
If $B\in\F$ is nilpotent with finite $\pi_1$ and a finite complex $Z$ satisfies $H_k(Z;\Q)=0$ for $k\in q(B)$, then $[Z,B]$ is finite.
\end{lemma}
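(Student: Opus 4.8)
The plan is to reduce the statement to a finiteness assertion about a finite Postnikov piece and then climb its principal Postnikov tower one stage at a time. First I would set $d = \dim Z$ and use the Postnikov section $s_d \colon B \to B_d$. Since $s_d$ induces isomorphisms on $\pi_i$ for $i \le d$ and $\pi_i(B_d) = 0$ for $i > d$, it is a $(d+1)$-equivalence, so it induces a bijection $[Z, B] \cong [Z, B_d]$; a complex of dimension $d$ cannot detect the homotopy of $B$ above degree $d$. Hence it suffices to show that $[Z, B_d]$ is finite, where $B_d$ is a finite Postnikov piece with $\pi_i(B_d) = \pi_i(B)$ finitely generated for $i \le d$ and $\pi_1(B_d) = \pi_1(B)$ finite.

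Because $B$ is nilpotent, $B_d$ admits a principal refinement of its Postnikov tower, a finite tower
$$B_d = E_r \to \cdots \to E_1 \to E_0 = *$$
in which each $E_k \to E_{k-1}$ is a principal fibration with fiber $K(A_k, n_k)$ classified by a map $E_{k-1} \to K(A_k, n_k+1)$, where each $A_k$ is a finitely generated abelian subquotient of $\pi_{n_k}(B)$ (the graded pieces of the nilpotent $\pi_1$-filtration of the homotopy groups; those with $n_k = 1$ are finite since $\pi_1(B)$ is). I would induct up this tower. The exact sequence of the principal fibration gives
$$H^{n_k}(Z; A_k) \to [Z, E_k] \xrightarrow{p_*} [Z, E_{k-1}] \to H^{n_k+1}(Z; A_k),$$
in which the image of $p_*$ is the kernel of the last map and each nonempty fiber of $p_*$ is a single orbit of the $H^{n_k}(Z; A_k)$-action, hence a quotient of that group. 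Therefore $[Z, E_k]$ is finite as soon as $[Z, E_{k-1}]$ and $H^{n_k}(Z; A_k)$ are; since $[Z, E_0] = *$, everything comes down to the finiteness of each $H^{n_k}(Z; A_k)$.

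To finish I would check that each $H^{n_k}(Z; A_k)$ is finite. As $Z$ is finite and $A_k$ is finitely generated, this group is finitely generated, so it is finite if and only if $H^{n_k}(Z; A_k) \otimes \Q = 0$, that is, if and only if $H^{n_k}(Z; \Q) = 0$ or $A_k \otimes \Q = 0$. If $A_k$ is finite this holds automatically. Otherwise $A_k \otimes \Q \neq 0$, and since $A_k$ is a subquotient of $\pi_{n_k}(B)$ this forces $\pi_{n_k}(B) \otimes \Q \neq 0$, i.e. $n_k \in q(B)$; the hypothesis then gives $H_{n_k}(Z; \Q) = 0$, hence $H^{n_k}(Z; \Q) = 0$, so the group is finite. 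This closes the induction and shows $[Z, B] \cong [Z, B_d]$ is finite.

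The main obstacle is the principal-fibration bookkeeping: producing the principal refinement from nilpotence and verifying that the fibers of each $p_*$ are genuine orbits of the $H^{n_k}(Z; A_k)$-action, so that finiteness of a single cohomology group controls the count at each stage. By contrast, the passage from $A_k \otimes \Q \neq 0$ to $n_k \in q(B)$ and the reduction $[Z, B] \cong [Z, B_d]$ are routine.
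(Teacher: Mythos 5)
Your proposal is correct and follows essentially the same route as the paper: induct up a principal refinement of the Postnikov tower of $B$ (truncated at $\dim Z$, which is equivalent to the paper's use of $[Z,B]\cong[Z,B(k)]$ for large $k$), bound the fibers of each stage by $H^{n_k}(Z;A_k)$ via the principal fibration, and deduce finiteness of these groups from the hypothesis $H_k(Z;\Q)=0$ for $k\in q(B)$ together with finiteness of $\pi_1(B)$. Your treatment of why each $H^{n_k}(Z;A_k)$ is finite is in fact more explicit than the paper's, which simply asserts it.
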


\begin{proof}
Let $\cdots\xrightarrow{q_{k+1}} B(k+1) \xrightarrow{q_k} B(k) \xrightarrow{q_{k-1}}\cdots\xrightarrow{q_0} B(0) = *$ be a principal replacement of the Postnikov tower of $B$. Since $Z$ is a finite complex, we have $[Z,B]\cong[Z,B(k)]$ for large $k$. Then it suffices to show that $[Z, B(k)]$ is finite for any $k$. We prove this by induction on $k$.

Each arrow $q_k\colon B(k+1) \to B(k)$ is a principal fibration with fiber $K(A_k,m_k)$ such that $A_k$ is an abelian group. Then we have an exact sequence of pointed sets
$$H^{m_k}(Z;A_k) \to [Z,B(k)]\xrightarrow{(q_{k-1})_*}[Z,B(k-1)].$$
Since $q_{k-1}\colon B(k) \to B(k-1)$ is principal, we have $|(q_{k-1})_*^{-1}(a)|\le|H^{m_k}(Z;A_k)|$ for any $a\in[Z,B(k-1)]$. Moreover, by the assumption on $X$, $\widetilde{H}^{m_k}(Z;A_k)$ is finite for any $k$. Then the proof is done by induction on $k$ starting with $[Z,B(0)]=*$ for $B(0)=*$.
\end{proof}

To consider the second case, we introduce:

\begin{definition} \label{torsion_annihilator}
We say that a  space $Z$ has torsion annihilators if it has the following properties:
\begin{enumerate}
\item $\pi_1(Z)$ is an abelian group;
\item for any given integers $n,N$, there is a self-map $g\colon Z\to Z$ such that
\begin{enumerate}
\item $g_*\otimes\Q\colon\pi_*(Z)\otimes\Q\to\pi_*(Z)\otimes\Q$ is an isomorphism, and
\item for each $i \le n$, the map $g_* \colon \pi_i(Z) \to \pi_i(Z)$ is multiplication by an integer $m_i$ with $N \mid m_i$.
\end{enumerate}
\end{enumerate}
\end{definition}

For example, $S^n \vee \R P^\infty$ is a space which has torsion annihilators but is not nilpotent.

\begin{lemma}
\label{finite2}
If $B\in\F$ has torsion annihilators and a finite complex $Z$ satisfies $H_k(Z;\Q)=0$ for $k\in q(B)$, then $[Z,B]$ is finite.
\end{lemma}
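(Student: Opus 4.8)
The plan is to reduce the ``torsion annihilators'' case to the finiteness principle already established in Lemma \ref{finite1}, but since $B$ need not be nilpotent we cannot invoke that lemma directly. Instead I would exploit the self-maps $g$ provided by Definition \ref{torsion_annihilator} to show that $[Z,B]$, which is a priori a set equipped with the finite-type Postnikov data, is ``rationally invisible'' and has uniformly bounded torsion, forcing finiteness. Concretely, let $d=\dim Z$. Since $Z$ is a finite complex, $[Z,B]\cong[Z,B_d]$ for the $d$-th Postnikov section $B_d$, so it suffices to prove $[Z,B_d]$ is finite, and I may assume $B$ itself is a finite Postnikov piece (all homotopy above dimension $d$ being irrelevant).

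The key step is the following. Choose $n=d$ and a large integer $N$ (to be fixed below), and let $g\colon B\to B$ be the self-map from Definition \ref{torsion_annihilator}, so that on each $\pi_i(B)$ with $i\le d$ the map $g_*$ is multiplication by an integer $m_i$ divisible by $N$, while $g_*\otimes\Q$ is an isomorphism. Postcomposition gives a map $g_*\colon[Z,B]\to[Z,B]$. I would first argue that $g_*$ is a bijection on $[Z,B]$: because $g_*\otimes\Q$ is an isomorphism in each degree, the induced map on the (finite-type, rationally trivial-modulo-finite) obstruction groups governing the Postnikov tower of $B$ through dimension $d$ is an isomorphism after inverting finitely many primes, and one climbs the tower degree by degree using the exact sequences of pointed sets
$$H^{m_k}(Z;A_k)\to[Z,B(k)]\xrightarrow{(q_{k-1})_*}[Z,B(k-1)]$$
exactly as in the proof of Lemma \ref{finite1}, now carrying the action of $g_*$ along and checking inductively that it is bijective on each stage. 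The hypothesis $H_k(Z;\Q)=0$ for $k\in q(B)$ guarantees that whenever $A_k$ has positive rank the relevant cohomology group $H^{m_k}(Z;A_k)$ is \emph{finite} (its free part would pair $\pi_{m_k}(B)\otimes\Q\ne0$ against $H^{m_k}(Z;\Q)=0$), so each fiber of $(q_{k-1})_*$ is finite.

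The finiteness then comes from a pigeonhole argument on the bounded torsion. On each fiber, the multiplication-by-$m_k$ behaviour of $g_*$ on the finite group $H^{m_k}(Z;A_k)$ (here $A_k=\pi_{m_k}(B)$ is finite whenever it contributes, by the previous paragraph) means that choosing $N$ to annihilate all the finitely many relevant finite coefficient groups makes $g_*$ act trivially on those fibers; combined with bijectivity of $g_*$ on the whole set this pins $[Z,B]$ down to a single bijectivity class of finite fibers, and unwinding the finitely many stages of the tower yields $|[Z,B]|<\infty$. I expect the main obstacle to be the bookkeeping in the non-nilpotent case: the fibration $q_{k-1}$ is only a \emph{principal-type} stage after replacing $B_d$ appropriately, and since $\pi_1(Z)$ and $\pi_1(B)$ need not act trivially, one must verify that the exact sequence of pointed sets above still has finite fibers and that $g_*$ is compatible with the $\pi_1$-actions. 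Controlling this twisting — essentially showing that the ``torsion annihilator'' self-maps can be chosen to respect the tower well enough for the inductive bijectivity-plus-bounded-torsion argument to close — is the crux, and is exactly the reason the hypothesis was packaged as a self-map condition rather than a nilpotency condition.
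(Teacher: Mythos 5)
Your reduction to $[Z,B_d]$ and your observation that the hypothesis makes the untwisted groups $H^k(Z;\pi_k(B))$ finite are both in line with the paper, but the engine of your argument --- that the torsion-annihilator self-map $g$ induces a \emph{bijection} on $[Z,B]$ --- is false, and the finiteness mechanism built on it does not work. A rational homotopy isomorphism need not induce a bijection on $[Z,-]$: for $B=\R P^\infty$ and $N$ even, any self-map $g$ acting on $\pi_1(B)=\Z/2$ by multiplication by $m_1$ with $N\mid m_1$ is null homotopic, so $g_*$ on $[Z,\R P^\infty]=H^1(Z;\Z/2)$ is the constant map. This is not a defect to be repaired: the paper's proof runs in exactly the opposite direction. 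One inductively composes torsion-annihilator self-maps so that the induced self-map of $[Z,B_n]$ becomes the \emph{constant} map at the basepoint; then every class in $[Z,B_n]$ lifts to the homotopy fiber $F$ of that composite self-map of $B_n$, and $F$ is a finite Postnikov piece with finite homotopy groups (being the fiber of a rational equivalence), so $[Z,F]$ is finite and surjects onto $[Z,B_n]$. Note also that if your bijectivity claim were combined with ``$g_*$ acts trivially on the fibers'' as you propose, the pigeonhole would force every fiber to be a singleton and hence $[Z,B]=\ast$, which is far too strong a conclusion.

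The point you correctly flag as the crux --- the $\pi_1$-twisting in the non-nilpotent case --- is precisely what your fiber-counting cannot handle and what the constant-map trick is designed to bypass. For a non-principal stage $B_n\to B_{n-1}$, the fiber of $[Z,B_n]\to[Z,B_{n-1}]$ over a class $a$ is governed by cohomology with local coefficients twisted via $a$, and this is \emph{not} controlled by $H_*(Z;\Q)$: for instance $H^2(\R P^2;\Q_{-})\cong\Q$ while $H_2(\R P^2;\Q)=0$. So the assertion that each fiber of $(q_{k-1})_*$ is finite can fail. In the paper's argument one first applies the inductively constructed self-map $h$ so that $p_n\circ h_n\circ f\simeq\ast$; the resulting lift $e$ of $h_n\circ f$ is then an honest pointed map $Z\to K(\pi_n(B),n)$, classified by the \emph{untwisted} group $H^n(Z;\pi_n(B))$, which the hypothesis makes of finite exponent $N$; a further torsion annihilator $\ell$ with $N\mid M$ kills $e$ and makes $\ell_n\circ h_n$ constant on $[Z,B_n]$. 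You would need to adopt this constant-map-plus-torsion-fiber mechanism (or otherwise dispose of the local-coefficient issue) for your proof to close.
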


\begin{proof}
Since $Z$ is a finite complex, we have $[Z,B]\cong[Z,B_n]$ for large $n$. Then it suffices to show that $[Z, B_n]$ is finite for any $n$. To see this, we prove by induction that there is a self-map $g\colon B\to B$ such that $g$ is an isomorphism in rational homotopy groups and $(g_n)_*\colon[Z,B_n]\to[Z,B_n]$ is the constant map. When $B_0 = *$ this condition is satisfied.

Suppose that $[Z,B_{n-1}]$ is finite and there is a self-map $h\colon B \to B$ such that $h$ is an isomorphism in rational homotopy groups and $(h_i)_*\colon[Z,B_i] \to [Z,B_i]$ is the constant map for $i < n$. By the naturality of Postnikov towers, we have the following homotopy commutative diagram.
$$\xymatrix{
K(\pi_n(B),n)\ar[r]\ar[d]^{h_*}&B_n\ar[r]^{p_n} \ar[d]^{h_n}&B_{n-1}\ar[d]^{h_{n-1}}\\
K(\pi_n(B),n)\ar[r]&B_n \ar[r]^{p_n} & B_{n-1}
}$$
Then any map $f \colon Z\to B_n$ satisfies $p_n \circ h_n \circ f\simeq h_{n-1}\circ p_n\circ f\simeq*$, so $h_n \circ f$ has a lift $e\colon Z\to K(\pi_n(B),n)$, up to homotopy. By the assumption on $Z$, there is an integer $N$ such that $N\cdot H^n(Z;\pi_n(B))=0$, so $Ne=0$. Since $B$ has torsion annihilators, there is a self-map $\ell \colon B\to B$ such that $\ell$ is an isomorphism in rational homotopy groups and the map $\ell_* \colon\pi_n(B)\to\pi_n(B)$ is the multiplication by an integer $M$ with $N\mid M$. Then we see that $\ell_n \circ h_n \circ f\simeq*$ for any $f\in[Z,B_n]$. Let $F$ be the homotopy fiber of $\ell_n \circ h_n$. Then $F$ is a torsion space and $[Z,F]\to[Z,B_n]$ is surjective. Since $Z$ is a finite complex, $[Z,F]$ is a finite set, so $[Z,B_n]$ is too a finite set. This completes the proof.
\end{proof}

Now we give our answer to Problem \ref{generalized-top-dBE}.

\begin{theorem}
\label{dBE_general}
Let $s_n \colon B \to B_n$ be the $n$-th Postnikov section, and suppose that $B\in\F$ is nilpotent or has torsion annihilators. If $H_k(X;\Q)=0$ for $k\in q(B)$, then $\Ph(X,s_n)$ is trivial for any $n$.
\end{theorem}

\begin{proof}
Consider a map between the inverse systems of pointed sets $\{[X^k,B]\} \to \{[X^k,B_n]\}$ induced by the Postnikov section $s_n\colon B\to B_n$. There is a commutative diagram
$$\xymatrix{
[X, B] \ar[r]^-{\pi_B} \ar[d]_{(s_n)_*} & \lim [X^k, B] \ar[d]^{(s_n)_*}\\
\Ph(X,s_n) \ar[r]^-{\pi_{B_n}} & \lim \, (s_n)_*[X^k, B],
}$$
where the horizontal arrows are surjective by \eqref{milnor} and Proposition \ref{Ker_I}. Since $[X^k, B_n] \cong [X, B_n]$ for $k > n$, the map $\pi_{B_n} \colon [X, B_n] \to \lim [X^k, B_n]$ is injective. Then since $\Ph(X,s_n)$ is a subset of $[X,B_n]$ and the lower $\pi_{B_n}$ is the restriction of $\pi_{B_n}\colon [X, B_n] \to \lim [X^k, B_n]$, the lower $\pi_{B_n}$ is injective, so it is bijective. Then it follows that $\Ph(X, s_n)$ is trivial if and only if the right $(s_n)_* $ is surjective. Thus we shall show that the right $(s_n)_*$ is surjective.

Note that the map $(s_n)_* \colon [X^k,B] \to (s_n)_*[X^k, B]$ is surjective for any $k$ and that by Lemmas \ref{finite1} and \ref{finite2}, $[X^k,B]$ is a finite set for any $k$. It follows from Lemma \ref{surj} that the right $(s_n)_*$ is surjective as desired. This completes the proof.
\end{proof}

Finally, we deal with the case that $\varphi$ is the inclusion $i_n \colon \R P^n \hookrightarrow \R P^\infty$. Since $\R P^n$ is nilpotent for an odd $n$, Theorem \ref{dBE_general} implies the following corollary:

\begin{corollary}
\label{dBE_answer}
If $H_{2n+1}(X ; \Q) = 0$ then $\Ph(X, i_{2n+1})$ is trivial.
\end{corollary}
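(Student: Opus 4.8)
The plan is to apply Theorem \ref{dBE_general} directly to the inclusion $i_n \colon \R P^n \to \R P^\infty$, after checking that the hypotheses of that theorem are satisfied in this case. The key observation, already noted in the excerpt, is that $\R P^n \to \R P^\infty$ is the first Postnikov section of $\R P^n$, so we are precisely in the situation $\Ph(X, s_1)$ with $B = \R P^n$. To invoke Theorem \ref{dBE_general} I must verify two things: first, that $B = \R P^n \in \F$ is either nilpotent or has torsion annihilators; second, that the homological vanishing hypothesis $H_k(X; \Q) = 0$ holds for all $k \in q(\R P^n)$.

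First I would establish the nilpotency of $\R P^n$ for odd $n$. The source of non-nilpotency for real projective spaces is the action of $\pi_1 = \Z/2$ on the higher homotopy groups; for odd $n$ this action turns out to be trivial (indeed, $\R P^n$ for odd $n$ is orientable, and the standard computation shows the fundamental group acts trivially on $\pi_k$), so $\R P^n$ is a nilpotent (in fact simple) space. This is the step flagged in the sentence immediately preceding the corollary, so I would state it as the reason we land in the nilpotent case of Theorem \ref{dBE_general}.

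Next I would compute $q(\R P^n)$, the set of $k \ge 2$ with $\pi_k(\R P^n) \otimes \Q \ne 0$. Since $\R P^n$ is covered by $S^n$, its rational homotopy groups agree with those of $S^n$. For odd $n$, the sphere $S^n$ is rationally equivalent to $K(\Q, n)$, so the only nonvanishing rational homotopy group occurs in degree $n$; hence $q(\R P^n) = \{n\}$. Therefore the hypothesis ``$H_k(X; \Q) = 0$ for $k \in q(B)$'' of Theorem \ref{dBE_general} reduces exactly to the single condition $H_n(X; \Q) = 0$, which is precisely the hypothesis of the corollary.

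With these two verifications in place, the corollary follows immediately: Theorem \ref{dBE_general} gives that $\Ph(X, s_1) = \Ph(X, i_n)$ is trivial. The main (and only non-routine) obstacle is the rational homotopy computation showing $q(\R P^n) = \{n\}$ for odd $n$, which rests on the fact that an odd sphere is rationally a single Eilenberg--MacLane space; the nilpotency claim is standard but must be stated explicitly since it selects which branch of Theorem \ref{dBE_general} applies. Everything else is a direct substitution of hypotheses.
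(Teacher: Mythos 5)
Your proof is correct and follows exactly the paper's route: the paper likewise deduces the corollary from Theorem \ref{dBE_general} by noting that $\R P^n$ is nilpotent for odd $n$, with $q(\R P^n)=\{n\}$ coming from the rational homotopy of the universal cover $S^n$ (an odd sphere, hence rationally a $K(\Q,n)$). Your write-up simply makes explicit the two verifications the paper leaves to the reader.
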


We finally show that Corollary \ref{dBE_answer} is optimal by giving an example of a space $X$ such that $H_n(X;\Q)\ne 0$ and there is a non-trivial relative phantom map from $X$ to $i_n\colon\R P^n\to\R P^\infty$. We will use the following simple lemma.

\begin{lemma}
\label{odd_degree}
Let $\Z/2$ act on $S^n$ by the antipodal map. 
For every odd integer $k$, there is a $\Z/2$-map $f \colon S^n \to S^n$ of degree $k$.
\end{lemma}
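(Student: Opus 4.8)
The plan is to prove the statement by induction on $n$, building the desired $\Z/2$-map out of an explicit degree-$k$ self-map of the circle together with an equivariant suspension operation that preserves both equivariance and degree. Throughout I regard $S^n$ as the unit sphere of $\R^{n+1}$ with $\Z/2$ acting antipodally, so that a $\Z/2$-map is a map $f$ with $f(-x)=-f(x)$ (an odd map). Note that $n\ge 1$ is needed for the statement to hold for all odd $k$, since on $S^0$ the only odd maps have degree $\pm 1$.

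For the base case $n=1$ I would identify $S^1$ with the unit circle in $\C$, on which the antipodal action is $z\mapsto -z$, and take $p_k\colon S^1\to S^1$, $p_k(z)=z^k$. This map has degree $k$ for every integer $k$, and it is a $\Z/2$-map precisely when $k$ is odd, since $p_k(-z)=(-z)^k=(-1)^k z^k$ equals $-z^k=-p_k(z)$ exactly when $(-1)^k=-1$. Thus $p_k$ realizes every odd $k$ (including negative ones) on $S^1$.

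For the inductive step I would define an equivariant suspension taking a $\Z/2$-map $f\colon S^n\to S^n$ to a $\Z/2$-map $\tilde f\colon S^{n+1}\to S^{n+1}$. Writing points of $S^{n+1}\subset\R^{n+1}\times\R$ as $(x,t)$ with $|x|^2+t^2=1$, and $x=\sqrt{1-t^2}\,u$ with $u\in S^n$ when $|t|<1$, I would set $\tilde f(x,t)=(\sqrt{1-t^2}\,f(u),\,t)$ with $\tilde f(0,\pm 1)=(0,\pm 1)$. The antipodal map is $(x,t)\mapsto(-x,-t)$, and since $\sqrt{1-t^2}$ is even in $t$ and $f(-u)=-f(u)$, a direct check gives $\tilde f(-x,-t)=-\tilde f(x,t)$ (at the poles one checks $\tilde f(-(0,1))=\tilde f(0,-1)=-\tilde f(0,1)$), so $\tilde f$ is a $\Z/2$-map. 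Moreover $\tilde f$ is exactly the unreduced suspension of $f$, restricting to $f$ on the equator and preserving the two hemispheres, so $\deg\tilde f=\deg f$. Starting from $p_k$ on $S^1$ and iterating this construction $n-1$ times then produces a $\Z/2$-map $S^n\to S^n$ of degree $k$, completing the induction.

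The only genuinely technical points are the continuity of the explicit formula for $\tilde f$ across the poles (where $\sqrt{1-t^2}\to 0$) and the identification of $\tilde f$ with the ordinary suspension so that the degree is preserved; both are routine, so no step should present a real obstacle. As an alternative I would note that the same construction can be phrased via the join: under $S^n\cong S^1\ast S^{n-2}$ the antipodal action restricts to the diagonal join action, and $p_k\ast\mathrm{id}_{S^{n-2}}$ is an equivariant map of degree $\deg p_k\cdot\deg\mathrm{id}=k$, using the multiplicativity $\deg(f\ast g)=\deg f\cdot\deg g$.
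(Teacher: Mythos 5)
Your proof is correct and follows exactly the paper's argument: take $z\mapsto z^k$ on $S^1$ (a $\Z/2$-map precisely because $k$ is odd) and apply the $(n-1)$-fold equivariant suspension, which preserves both the equivariance and the degree. You have simply spelled out the details that the paper leaves as ``trivial.''
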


\begin{proof}
The case $n=1$ is trivial, and for $n > 1$, take the $(n-1)$-fold suspension of the $\Z/2$-map on $S^1$.
\end{proof}

\begin{remark}
Lemma \ref{odd_degree} implies that there is a mistake in the calculation of the homotopy set $[\R P^n,\R P^n]$ for $n$ even due to McGibbon \cite{M2}. It is calculated as follows. Consider the homotopy cofibration sequence
$$S^{n-1}\xrightarrow{p_{n-1}}\R P^{n-1}\xrightarrow{i_{n-1}}\R P^n\xrightarrow{q_n}S^n$$
where $p_{n-1}$ is the universal covering, $i_{n-1}$ is the inclusion, and $q_n$ is the pinch map to the top cell. Then for $n-k>0$ and $k>0$, there is an exact sequence of groups
\begin{multline*}
[\Sigma^{k+1}\R P^{n-k-1},\R P^n]\xrightarrow{(\Sigma^{k+1}p_{n-k-1})^*}\pi_n(\R P^n)\\
\xrightarrow{(\Sigma^kq_{n-k})^*}[\Sigma^k\R P^{n-k},\R P^n]\xrightarrow{(\Sigma^ki_{n-k-1})^*}[\Sigma^k\R P^{n-k-1},\R P^n].
\end{multline*}
Since $\pi_n(\R P^n)=\Z\{p_n\}$, $q_k\circ p_k=1+(-1)^{k+1}$ and $[\Sigma^k\R P^{n-k-1},\R P^n]=*$, we inductively get 
$$[\Sigma^k\R P^{n-k},\R P^n]\cong\begin{cases}\Z&n-k\text{ is odd}\\\Z/2&n-k\text{ is even}\end{cases}$$
where in both cases, $p_n\circ\Sigma^kq_{n-k}$ is a generator. We next consider the exact sequence of pointed sets
$$[\Sigma\R P^{n-1},\R P^n]\xrightarrow{(\Sigma p_{n-1})^*}\pi_n(\R P^n)\xrightarrow{q_n^*}[\R P^n,\R P^n]\xrightarrow{i_{n-1}^*}[\R P^{n-1},\R P^n]\xrightarrow{p_{n-1}^*}\pi_{n-1}(\R P^n)$$
where $\pi_{n-1}(\R P^n)=0$ and $[\R P^{n-1},\R P^n]=\{*,i_{n-1}\}$. Then by the above calculation, we have
$$(i_{n-1}^*)^{-1}(*)=\{*,p_n\circ q_n\}.$$
On the other hand, by considering the action of the top cell, we see that $(i_{n-1}^*)^{-1}(i_{n-1})=\{h_{2j-1}\,\vert\,j\in\Z\}$, where $h_m$ is the self-map of $\R P^n$ which lifts to the degree $m$ self-map of $S^n$ as in Lemma \ref{odd_degree}. Thus we obtain that
$$[\R P^n,\R P^n]=\{*,p_n\circ q_n,h_{2j-1}\;(j\in\Z)\}.$$
\end{remark}

For $n>2$, let $X(n)$ be the cofiber of the composite of maps
$$\bigvee_pS^{n+2p-3}\xrightarrow{\alpha_1}S^n\xrightarrow{\pi}\R P^n$$
where $p$ ranges over all odd primes and $\alpha_1\vert_{S^{n+2p-3}}$ is a generator $\alpha_1(p)$ of $\pi_{n+2p-3}(S^n)\cong\Z/p$ (see \cite{T}). By definition, we have $H^1(X(n);\Z/2)\cong\Z/2$, and let $f\colon X(n)\to\R P^\infty$ be the generator of $H^1(X(n);\Z/2)$.

\begin{proposition}
\label{dBE_non-trivial}
The map $f\colon X(n)\to\R P^\infty$ is a non-trivial relative phantom map to the inclusion $i_n\colon\R P^n\to\R P^\infty$.
\end{proposition}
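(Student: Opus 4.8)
The plan is to verify the two defining properties separately: that $f$ restricted to every finite skeleton lifts through $i_n$ (so $f$ is a relative phantom map), and that $f$ itself admits no such lift (so $f$ is non-trivial). Both rest on the same bookkeeping involving the odd-degree self-maps of $\R P^n$ from Lemma \ref{odd_degree} together with the fact that $\alpha_1(p)$ has order $p$. Throughout I write $\psi=\pi\circ\alpha_1\colon\bigvee_pS^{n+2p-3}\to\R P^n$ for the attaching map whose cofiber is $X(n)$, and I use that $n>2$ makes $\pi\colon S^n\to\R P^n$ the universal cover, so $\pi_*\colon\pi_m(S^n)\to\pi_m(\R P^n)$ is an isomorphism for $m\ge 2$. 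Since all the attached cells have dimension $n+2p-2\ge n+4$, the inclusion $\R P^n\hookrightarrow X(n)$ induces an isomorphism on $H^1(-;\Z/2)$; as $\R P^\infty=K(\Z/2,1)$ identifies $[A,\R P^\infty]$ with $H^1(A;\Z/2)$, the map $f$ is exactly the class restricting to the generator $i_n\in[\R P^n,\R P^\infty]$.

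For the relative phantom property, fix $k$ and let $P$ be the finite set of odd primes $p$ with $n+2p-2\le k$, so that $X(n)^k$ is the cofiber of $\psi_P=\psi|_{\bigvee_{p\in P}S^{n+2p-3}}$. Put $k'=\prod_{p\in P}p$, an odd integer, and let $h_{k'}\colon\R P^n\to\R P^n$ be the self-map covered by a degree-$k'$ $\Z/2$-map $S^n\to S^n$ as in Lemma \ref{odd_degree}. On the wedge summand $S^{n+2p-3}$ the composite $h_{k'}\circ\psi_P$ equals $\pi\circ(k'\cdot\alpha_1(p))$; since $\alpha_1(p)$ has order $p\mid k'$, each such composite is null homotopic, whence $h_{k'}\circ\psi_P\simeq*$. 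Therefore $h_{k'}$ extends over the cofiber to a map $\tilde f\colon X(n)^k\to\R P^n$ with $\tilde f|_{\R P^n}=h_{k'}$. Because $i_n\circ h_{k'}\simeq i_n$ and the restriction $H^1(X(n)^k;\Z/2)\to H^1(\R P^n;\Z/2)$ is an isomorphism, the maps $i_n\circ\tilde f$ and $f|_{X(n)^k}$ agree, both restricting to $i_n$ on $\R P^n$. Thus $\tilde f$ is a lift of $f|_{X(n)^k}$ through $i_n$, and $f$ is a relative phantom map.

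For non-triviality, suppose a lift $\tilde f\colon X(n)\to\R P^n$ with $i_n\circ\tilde f\simeq f$ exists, and set $g=\tilde f|_{\R P^n}$. Then $i_n\circ g\simeq f|_{\R P^n}=i_n$, so $g$ is an isomorphism on $\pi_1$; lifting $g\circ\pi$ through the universal cover gives $\tilde g\colon S^n\to S^n$ with $\pi\circ\tilde g=g\circ\pi$. Since $\pi\circ\tau=\pi$ for the antipode $\tau$, the map $\tilde g\circ\tau$ is again a lift of $g\circ\pi$ and hence equals $\tilde g$ or $\tau\circ\tilde g$; the former would force $\tilde g$ to factor through $\pi$ and thus $g$ to be trivial on $\pi_1$, a contradiction, so $\tilde g$ is equivariant and therefore of odd degree $d$. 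Because $g=\tilde f\circ\iota$ with $\iota\colon\R P^n\hookrightarrow X(n)$ and $\iota\circ\psi\simeq*$, we get $g\circ\psi\simeq*$, i.e.\ $\pi\circ(d\cdot\alpha_1(p))\simeq*$ for every odd prime $p$. As $\pi_*$ is injective in these degrees and $\alpha_1(p)$ has order $p$, this forces $p\mid d$ for all odd primes $p$, which is impossible. Hence no global lift exists and $f$ is non-trivial.

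The main obstacle is the non-triviality step: one must show that any partial lift over $\R P^n$ is forced to come from an odd-degree self-map of $S^n$ — this is the equivariance-versus-factoring dichotomy for lifts through the double cover — and then exploit that a single integer cannot be divisible by every odd prime, whereas the relative phantom direction only ever meets finitely many primes at a time. This contrast between ``finitely many primes on each skeleton'' and ``all primes globally'' is precisely the phantom phenomenon, and the care needed lies in the degree-and-order bookkeeping that converts it into a contradiction.
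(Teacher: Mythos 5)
Your proof is correct and follows essentially the same route as the paper's: odd-degree $\Z/2$-equivariant self-maps of $S^n$ descending to $\R P^n$ kill the finitely many order-$p$ classes $\alpha_1(p)$ present in each skeleton, while a global lift would require a single odd degree divisible by every odd prime. The one point you elide is that the identity $\tilde{g}\circ\alpha_1(p)\simeq k'\alpha_1(p)$ for a degree-$k'$ map $\tilde{g}$ (i.e.\ that post-composition with a degree map distributes) requires $\alpha_1(p)$ to be a co-H-map, which the paper justifies by citing Berstein--Hilton.
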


\begin{proof}
Suppose that $f$ is homotopic to a map $g\colon X(n)\to\R P^n$. Then since $g$ induces an isomorphism in $\pi_1$, $g\vert_{\R P^n}$ lifts to a degree $k$ map of $S^n$ for some odd integer $k$. By definition, the composite $g\vert_{\R P^n}\circ\pi\circ\alpha_1(p)$ must be null homotopic for any odd prime $p$. Since $\alpha_1(p)$ is a co-H-map \cite{BH}, we have
$$g\vert_{\R P^n}\circ\pi\circ\alpha_1(p)\simeq\pi\circ k\circ\alpha_1(p)\simeq\pi\circ(k\alpha_1(p)).$$
Then since $\pi_*\colon\pi_*(S^n)\to\pi_*(\R P^n)$ is an isomorphism for $*\ge 2$, we get that $k\alpha_1(p)$ is null homotopic. Thus $k$ is divisible by any odd prime, which is a contradiction because $k \ne 0$ since $k$ is odd. Therefore $f$ does not lift to $\R P^n$ through the inclusion $i_n\colon\R P^n\to\R P^\infty$, up to homotopy. So if $f$ is a relative phantom map, then it is non-trivial.

Fix an odd prime $p$. By Lemma \ref{odd_degree}, for any given odd integer $k$, there is a self-map $h_k\colon\R P^n\to\R P^n$ which lifts to a degree $k$ self-map of $S^n$. Let $p_1,\ldots,p_m$ be all odd primes less than or equal to $p$. Then by the above observation, we see that the map $h_k\colon\R P^n\to\R P^n$ extends to a map $\bar{h}_k\colon X(n)\to X(n)$, and by looking at $\pi_1$, we have
$$f\simeq f\circ \bar{h}_{p_1}\circ\cdots\circ \bar{h}_{p_m}.$$
Since $h_{p_i}\circ\pi\circ\alpha_1(p)\simeq\pi\circ (p_i\alpha_1(p))$ as above, we see that the restriction of $f\circ \bar{h}_{p_1}\circ\cdots\circ \bar{h}_{p_m}$ to $X(n)^{n+2p-2}$ lifts to $\R P^n$ through $i_n$, up to homotopy. Since the prime $p$ can be arbitrary large, $f$ is a relative phantom map to the inclusion $i_n\colon\R P^n\to\R P^\infty$. Therefore we obtain that  $f$ is a non-trivial relative phantom map to $i_n\colon\R P^n\to\R P^\infty$, completing the proof.
\end{proof}

\begin{remark}
It follows from Corollary \ref{dBE_answer} that $H_{2n+1}(X ; \Q) = 0$ implies that $\Ph(X, i_{2n+1})$ is trivial. On the other hand, $H_{2n}(X(2n) ; \Q) = 0$ but $\Ph(X, i_{2n})$ is non-trivial. In fact, there is no $k$ such that $H_k(X ; \Q) = 0$ implies that $\Ph(X, i_{2n})$ is trivial. 

If such an integer $k$ exists, $k = n + 2p -2$ for some odd prime $p$ by the rational homology of $X(2n)$. Let $X'(n)$ be the subcomplex of $X(n)$ such that we delete the $n + 2p -2$-cell from $X(n)$. Then the restriction $f|_{X'(n)} \colon X'(n) \to \R P^\infty$ is a non-trivial relative phantom map to $i_n$ by the same reason.
\end{remark}

\end{document}